\documentclass[10pt,twoside,a4]{amsart}
\usepackage{amssymb,amsmath,amsfonts,amsthm,graphicx,amscd}
\usepackage{enumerate,mathrsfs,longtable,bm,float,xcolor}
\usepackage[dvips]{epsfig}
\usepackage{amsmath,amsfonts,amssymb,amsthm,bm,float,makecell,booktabs,MnSymbol}

\usepackage{enumerate}
\usepackage{amscd}
\usepackage{tikz}
\usetikzlibrary{positioning}
\usepackage[all,cmtip,line]{xy}

\DeclareMathAlphabet{\mathpzc}{OT1}{pzc}{m}{it}

\numberwithin{equation}{section}

\theoremstyle{plain}

\newtheorem{lem}{Lemma}[section]

\newtheorem{thm}[lem]{Theorem}
\newtheorem{prop}[lem]{Proposition}
\newtheorem{cor}[lem]{Corollary}
\theoremstyle{definition}
\newtheorem{exa}[lem]{Example}
\newtheorem{rem}[lem]{Remark}

\newtheorem{defn}[lem]{Definition}

\begin{document}
	
	\baselineskip 13truept
	
	\title{On $S$-Prime Element Principle}
	
\author{Sachin Sarode*,  Chetan Patil** and Vinayak Joshi***}
\address{\rm *Department of Mathematics, Shri Muktanand College\\ Gangapur, Dist. Chh. Sambhajinagar - 431 109, India.} \email{sarodemaths@gmail.com}

\address{\rm **School of Technology Management and Engineering, SVKM NMIMS Global University,
	Dhule-424 001, India.}
\email{patilcs19@gmail.com}
\address{\rm ***Department of Mathematics, Savitribai Phule Pune University, Pune-411 007, India.}
\email{vvjoshi@unipune.ac.in \\
	vinayakjoshi111@yahoo.com }

	\subjclass[2020]{Primary 06F10, 13A15, 13C05, Secondary 06A11}

	\maketitle

	\begin{abstract}
		In this paper, we introduce $S$-prime elements in $V$-lattices, where $S$ is a multiplicatively closed subset of a $V$-lattice $L$. In addition, we introduce $S$-Prime Element Principle to prove that certain elements in  $V$-lattices are
		$S$-prime elements. This principle leads to a direct and uniform approach to the results on the existence of prime elements in multiplicative lattices, when $S=\{1\}$.  
	\end{abstract}
	
	\maketitle
	

	\noindent{\bf Keywords:} 
	Multiplicative lattice, $S$-Ako family, $S$-Oka family, $S$-prime element.

	\baselineskip 14truept 
	\section{Introduction}\label{intro}

	Prime ideals play a central role in commutative ring theory, serving as the
	building blocks of algebraic geometry, number theory, and module theory. In fact, there are few results which demonstrates that the behavior of the prime ideals influence the behavior of all ideal. For example, if prime ideals are finitely generated, then all  ideals are finitely generated, equivalently, a ring is a Noetherian ring.

	Hamed and Malek \cite{HM} introduced   a generalization prime ideals known as
	\emph{$S$-prime ideals}. Let $S$ be a multiplicatively closed subset of a commutative ring $R$ with identity.
	A proper ideal $I \subsetneqq R$  disjoint from $S$ is called an \textit{$S$-prime} ideal if there exists
	$s \in S$ such that for all $a,b \in R$, 
	$
	ab \in I$, then $ sa \in I \ \text{ or } \ sb \in I.
	$
	Every prime ideal is an $S$-prime ideal (taking $S=\{1\}$), but in general, an
	$S$-prime ideal need not be prime. This concept broadens the applicability of
	prime ideals and provides new tools for studying ring-theoretic structures in
	contexts where strict primality is too restrictive.
	
	Our goal in this paper is to transport this idea from rings to multiplicative lattices - a concept introduced by Ward and Dilworth \cite{WD} to extend the abstract ideal theory of rings. 
	In particular, we define and study $S$-prime elements in $V$-lattices,
	extending both the lattice-theoretic notion of prime elements and the
	ring-theoretic notion of $S$-prime ideals.

	The study of prime elements, the generalization of prime elements, and their properties in  multiplicative lattice is the  main focus of many researchers. Different classes of elements and generalization of the prime element in multiplicative lattices were  studied by   Burton \cite{B}, Joshi and Ballal \cite{JB}, Jayaram \cite{J}, Jayaram and Johnson \cite{JJ, JJJ},  Sarode \cite{S} and, Sarode and Joshi \cite{SJ}.  
	
	\begin{defn}
		A \textit{multiplicative lattice} is a complete lattice $L$ together with a binary operation (multiplication) ``$\cdot$" satisfying:
		\begin{enumerate}
			\item $(L,\;\cdot)$ is commutative and associative,
			\item multiplication distributes over arbitrary joins,
			\item $1$ is the multiplicative identity.
		\end{enumerate}
	\end{defn}

	If $L$ is a multiplicative lattice and $a, b \in L$, then $a\cdot b \leq a \wedge b$. This property follows from the fact that the multiplication distributes over join and 1 is the multiplicative identity.

	\begin{minipage}{0.73\textwidth}
		\begin{rem}\label{43.2.}
			A natural example of a multiplicative lattice is the ideal lattice of all ideals of a commutative ring $R$ with 1. However, not all lattices admit a multiplicative lattice structure. For example, consider the lattice $N_5 = \{0,a,b,c,1\}$ shown in Figure~\ref{fig:n5}. Suppose, for contradiction, that $N_5$ admits a multiplication satisfying the axioms of a multiplicative lattice. Then
			$ b \cdot (a \vee c) = b \cdot 1 = b. $
			On the other hand,
			$ (b \cdot a) \vee (b \cdot c) \leq a \vee (b \wedge c) \leq a, $
			which implies
			$ b \cdot (a \vee c) > (b \cdot a) \vee (b \cdot c), $
			a contradiction. Hence $N_5$ is not a multiplicative lattice under any multiplication.\end{rem}
	\end{minipage}\hfill 	
	\begin{minipage}{0.2\textwidth}
		\begin{tikzpicture}[scale=0.8]
			\draw (5,0) -- (4,1) node at (5, -0.4) {$0$}; 
			\draw [fill=black] (5,0) circle(.1); 
			\draw (5,0) -- (6,2) node at (6, 2.3) {$c$}; 
			\draw [fill=black] (6,2) circle (.1); 
			\draw (4,1) -- (4,2) node at (3.7, 1) {$a$}; 
			\draw [fill=black](4,1) circle (.1); 
			\draw (4,2) -- (5,3) node at (3.7, 2) {$b$};
			\draw [fill=black](4,2) circle (.1); 
			\draw (6,2) -- (5,3) node at (5, 3.5) {$1$}; 
			\draw [fill=black](5,3) circle (.1);   
			\draw node at (5,-1) {Figure 1: $N_5$};
		\end{tikzpicture}
		
		\label{fig:n5} \vspace{.2in}
	\end{minipage}

	 In the lattice $N_5$, the multiplication do not distribute over join. This motivates us to define a broader class of lattices.

	\begin{defn}\label{def:vlattice}
		A complete lattice $L$ is said to be a \textit{$V$-lattice} if there exists a commutative, associative binary operation ``$\cdot$" on $L$ such that:
		\begin{enumerate}
			\item $a \leq b \implies a \cdot c \leq b \cdot c$, for  $a, b, c\in L$.
			\item $a \cdot b \leq a \wedge b$, for  $a, b\in L$.
			\item $a \cdot 1 = a$ for every $a\in L$.
		\end{enumerate}
	\end{defn}
	
	\begin{rem} \label{1.7.}
		Every multiplicative lattice is a $V$-lattice, but not conversely.
	\end{rem}

	In the literature on commutative rings, theorems concerning the \textit{existence} of prime ideals play a central role. 
	Analogous existence results have been established for prime elements in multiplicative lattices. 
	One such fundamental result, due to D.~D.~Anderson, is stated below for a special class of lattices known as \emph{$c$–lattices}. 
	By a $c$–lattice, we mean a compactly generated multiplicative lattice in which the greatest element $1$ is compact and product of two compact element is compact. 
	
	An element $c\in L$ is said to be \emph{compact} if whenever $c \leq \bigvee_{\alpha\in\Lambda} a_{\alpha}$ for some indexed family $\{a_{\alpha}\}_{\alpha\in\Lambda}\subseteq L$, 
	there exist finitely many indices $\alpha_{1},\dots,\alpha_{n}\in\Lambda$ such that 
	$c \leq \bigvee_{i=1}^{n} a_{\alpha_{i}}$.
	Here $L_{*}$ denotes the set of all compact elements of a complete lattice $L$. A lattice $L$ is \textit{compactly generated} if every
	element of $L$ is a join of compact elements of $L$.
	A proper element $p$ of a $c$–lattice is called \emph{prime} if $a\cdot b \leq p$ implies $a \leq p$ or $b \leq p$ for all $a,b\in L_{*}$.
	A nonempty subset $S$ of $L_{*}$ in a $V$-lattice $L$ is \textit{multiplicatively closed} if $s_{1} \cdot s_{2} \in S$ whenever $s_{1}, s_{2} \in S$.
	
	\textbf{Throughout this paper, $S$ is a multiplicatively closed subset of a $V$-lattice $L$ such that $1 \in S$ and $0 \notin S$.}
	
	\begin{thm}[{D.D.~Anderson~\cite{A}}]\label{2.13.}
		Let $L$ be a $c$-lattice. Suppose $a \in L$ and $t \nleq a$ for all $t \in S$, where $S$ is a multiplicatively closed subset of $L$. Then there exists a prime element $p$ of $L$ such that $a \leq p$, and $p$ is maximal with respect to the property $t \nleq p$ for all $t \in S$.
	\end{thm}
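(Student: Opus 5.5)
The plan is the standard Zorn's lemma argument, adapted to compact elements. Consider the set
\[
\Sigma=\{x\in L : a\le x \text{ and } t\nleq x \text{ for all } t\in S\},
\]
partially ordered by $\le$. It is nonempty because $a\in\Sigma$ by hypothesis.

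\textbf{Step 1: chains have upper bounds.} Let $\{x_\alpha\}$ be a nonempty chain in $\Sigma$ and put $x=\bigvee_\alpha x_\alpha$. Clearly $a\le x$. Suppose $t\le x$ for some $t\in S$. Since $S\subseteq L_*$, $t$ is compact, so $t\le x_{\alpha_1}\vee\cdots\vee x_{\alpha_n}$ for finitely many indices. Because the $x_\alpha$ form a chain, this finite join equals the largest of them, say $x_{\alpha_k}$. Then $t\le x_{\alpha_k}$, which contradicts $x_{\alpha_k}\in\Sigma$. Hence $x\in\Sigma$, and Zorn's lemma yields a maximal element $p\in\Sigma$. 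By construction $a\le p$, and $p$ is maximal with respect to $t\nleq p$ for all $t\in S$. Also $p\ne 1$, since $1\in S$.

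\textbf{Step 2: $p$ is prime.} Let $x,y\in L_*$ with $x\cdot y\le p$, and suppose $x\nleq p$ and $y\nleq p$. Then $p\vee x$ and $p\vee y$ both strictly exceed $p$. By maximality neither lies in $\Sigma$, so there are $s,t\in S$ with $s\le p\vee x$ and $t\le p\vee y$. Multiplication is monotone and distributes over joins, so
\[
s\cdot t\le (p\vee x)(p\vee y)=p^2\vee p y\vee x p\vee x y\le p.
\]
Here each of the first three terms is $\le p$ because products are below meets, and $xy\le p$ by assumption. Since $s\cdot t\in S$ by multiplicative closure, this contradicts $p\in\Sigma$. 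Therefore $x\le p$ or $y\le p$, and $p$ is prime.

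The only delicate point is Step 1. Closure of $\Sigma$ under joins of chains needs the elements of $S$ to be compact, which holds because $S\subseteq L_*$ by definition. Step 2 uses only full distributivity over joins, which a multiplicative lattice has; for a general $V$-lattice this step would need replacing. In the $c$-lattice setting, the assumptions that $1$ is compact and that products of compact elements are compact are not needed beyond $S\subseteq L_*$.
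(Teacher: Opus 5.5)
Your proof is correct. The paper does not prove this theorem itself; it is quoted from Anderson. Its framework, however, recovers the result along your lines. The family $\mathscr{F}=\{i\in L \mid s\le i \text{ for some } s\in S\}$ of Lemma~\ref{5.1.} is a semi-filter with the $M$-closed property and contains $1$, so it is an Ako family for $\{1\}$. Theorem~\ref{2.3.}, applied with $\{1\}$ in place of $S$, then makes every element of $Max(\mathscr{F}')$ prime; applied with $S$ itself it would only give $S$-primeness. Your Step~2 is that argument written out directly, and your key inequality $s\cdot t\le (p\vee x)(p\vee y)\le p\vee (x\cdot y)$ is the one in the proof of Lemma~\ref{5.1.}.

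What you add is Step~1. The paper gets maximal elements only from a standing hypothesis, or, in Theorem~\ref{2.5.}, by assuming that chains in $\mathscr{F}'$ have upper bounds in $\mathscr{F}'$. You instead verify this chain condition on $\Sigma=\{x\in\mathscr{F}' \mid a\le x\}$ from the compactness of the elements of $S$, as in the chain argument of Theorem~\ref{2.60.}. That is what actually puts a maximal element above $a$.

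Your closing remarks are also right. Step~2 works for all $x,y\in L$, not only compact ones. The remaining $c$-lattice hypotheses (compactness of $1$, closure of $L_*$ under products, compact generation) are not used.
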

	
	\begin{lem}[{F. Alarcon et al. \cite{AAJ}}]		Let $L$ be a $c$-lattice. Then $\operatorname{Max}(L) \subseteq \operatorname{Spec}(L)$.
	\end{lem}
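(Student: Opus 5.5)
The plan is to derive the lemma from Theorem~\ref{2.13.} by taking $S=\{1\}$. Here $\operatorname{Max}(L)$ is the set of maximal elements, i.e.\ proper elements $m$ such that $m\le x\le 1$ forces $x=m$ or $x=1$. $\operatorname{Spec}(L)$ is the set of prime elements. Since $L$ is a $c$-lattice, $1\in L_*$, and $S=\{1\}$ is a multiplicatively closed subset of $L_*$ with $1\in S$. Moreover $0\notin S$, provided $L$ is nontrivial; if $0=1$ then $\operatorname{Max}(L)$ is empty and there is nothing to prove.

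Fix $m\in\operatorname{Max}(L)$ and apply Theorem~\ref{2.13.} with $a=m$. The hypothesis ``$t\nleq a$ for all $t\in S$'' reduces to $1\nleq m$, which holds because $m$ is proper. The theorem then gives a prime element $p$ with $m\le p$, and $p$ is maximal with respect to $1\nleq p$. In particular $p$ is proper, since primes are proper by definition.

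Now maximality of $m$ finishes the argument. From $m\le p<1$ we get $p=m$, so $m$ is prime, and hence $\operatorname{Max}(L)\subseteq\operatorname{Spec}(L)$.

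There is essentially no obstacle. The only point to check is that the definitions match: primality in the paper is tested only on compact elements $a,b\in L_*$, and this is the notion produced by Theorem~\ref{2.13.}, so the output of the theorem is exactly what is needed.

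As a fallback, a direct argument is also available. Suppose $a,b\in L_*$ with $ab\le m$ but $a\nleq m$ and $b\nleq m$. Then $m\vee a=1=m\vee b$, and distributivity gives
\[
1=(m\vee a)(m\vee b)\le m\vee ab\le m,
\]
a contradiction.
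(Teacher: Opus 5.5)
Your proof is correct, but your main route is not the one the paper takes. The paper gives no proof where it cites the lemma. It recovers the lemma in Section~4 as the degenerate case $S=\{1\}$, $\mathscr{F}=\{1\}$ of the $S$-Prime Element Principle (Theorem~\ref{2.3.}). There $\mathscr{F}'=L\setminus\{1\}$, so $\operatorname{Max}(\mathscr{F}')=\operatorname{Max}(L)$. The only real input is that $\{1\}$ is an Ako family, i.e.\ $i\vee a=1=i\vee b$ forces $i\vee ab=1$. The paper asserts this without proof, and checking it is exactly your displayed inequality $(m\vee a)(m\vee b)\le m\vee ab$. So your fallback is the paper's argument with the principle unpacked.

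Your main route instead invokes Anderson's existence theorem (Theorem~\ref{2.13.}) and uses maximality of $m$ to identify the prime $p\ge m$ it produces with $m$ itself. That is legitimate: the theorem is available as a cited result, and you checked its hypotheses ($1\in L_*$, $1\nleq m$). It is, however, roundabout. Anderson's theorem rests on Zorn's lemma and on compactness of the elements of $S$, and the primeness half of its standard proof is the same distributivity computation. You bring in an existence result only for maximality of $m$ to make the existence part trivial.

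The three routes trade off as follows. Your direct argument is the most economical: it needs no Zorn and no compactness, holds in any multiplicative lattice, and tests primeness against all $a,b\in L$, not only compact ones. The paper's route buys uniformity, presenting the lemma as one instance of the principle behind the other results of Section~4. The route through Theorem~\ref{2.13.} buys only brevity, at the cost of hiding the one computation that actually proves the statement.
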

	
	These results not only parallel the classical theorems of commutative algebra but also highlight the robustness of the lattice-theoretic approach to algebraic structures.
	
	\begin{lem}[{Joshi and Sarode \cite{JS}}]
		Let $L$ be a reduced, $1$-compact, compactly generated lattice. For $x \in L$, if $x^*$ is maximal among $\{a^* \;|\; a \in L, a^* \not= 1\}$, then $x^*$ is a prime element of $L$.
	\end{lem}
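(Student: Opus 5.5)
The plan is to argue directly from the definition of primeness, with $a^{*}=\bigvee\{y\in L \mid a\cdot y=0\}$ the annihilator (pseudocomplement) of $a$. The idea is that if $a\cdot b\le x^{*}$ with $a\nleq x^{*}$, then $(x\cdot a)^{*}$ is an annihilator that lies above $x^{*}$, contains $b$, and is still proper. Maximality of $x^{*}$ then forces $(x\cdot a)^{*}=x^{*}$, and hence $b\le x^{*}$.

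\textbf{Step 1 (basic facts about annihilators).} First I would check the following in a multiplicative lattice, using only that multiplication distributes over arbitrary joins.
\begin{enumerate}
\item $x\cdot x^{*}=\bigvee\{x\cdot y\mid x\cdot y=0\}=0$.
\item For any $z$, $z\cdot x=0$ if and only if $z\le x^{*}$.
\item $a\le c$ implies $c^{*}\le a^{*}$.
\end{enumerate}
In particular, $a^{*}=1$ if and only if $a=1\cdot a=0$. By hypothesis $x^{*}\neq 1$, so $x^{*}$ is a proper element. The hypotheses ``reduced'', ``$1$-compact'' and ``compactly generated'' are the standing setting. As far as I can see, compact generation is only needed so that primeness can be tested on compact elements, as in the definition of prime. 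Reducedness seems not to be needed for this particular argument, though I would double-check that.

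\textbf{Step 2 (the key comparison).} Let $a,b\in L_{*}$ with $a\cdot b\le x^{*}$ and $a\nleq x^{*}$; we aim to show $b\le x^{*}$. Consider $(x\cdot a)^{*}$.
\begin{enumerate}
\item Since $x\cdot a\le x$, Step 1(3) gives $x^{*}\le (x\cdot a)^{*}$.
\item From $a\cdot b\le x^{*}$ we get $(a\cdot b)\cdot x\le x^{*}\cdot x=0$, so $b\cdot(x\cdot a)=0$ and hence $b\le (x\cdot a)^{*}$.
\item $(x\cdot a)^{*}\neq 1$. Otherwise $x\cdot a=0$, which by Step 1(2) gives $a\le x^{*}$, contrary to assumption.
\end{enumerate}
Thus $(x\cdot a)^{*}$ belongs to $\{c^{*}\mid c\in L,\ c^{*}\neq 1\}$ and lies above $x^{*}$. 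By maximality of $x^{*}$ in this set, $(x\cdot a)^{*}=x^{*}$. Therefore $b\le (x\cdot a)^{*}=x^{*}$, and $x^{*}$ is prime.

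\textbf{Main obstacle.} The only delicate point is Step 2(3): checking that $(x\cdot a)^{*}$ is a proper annihilator, so that maximality actually applies. This comes down to the equivalence $c^{*}=1\iff c=0$ from Step 1, which uses $1$ as the multiplicative identity. Everything else is monotonicity of multiplication together with $x\cdot x^{*}=0$.
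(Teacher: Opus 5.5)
Your proof is correct, and it is more direct than the paper's route. The paper does not prove this lemma itself (it is quoted from Joshi and Sarode). Within the paper it is recovered from Theorem~\ref{anni}, as follows. In a reduced lattice $a^*=(0:a)$, so $\{a^*\mid a^*\neq 1\}$ is exactly the set of annihilator elements. The non-annihilators are shown to form an Oka family using $((0:x):j)=(0:x\cdot j)$. The Prime Element Principle (Theorem~\ref{2.3.} with $S=\{1\}$) then makes maximal annihilators prime.

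Your $(x\cdot a)^*$ is exactly the colon $(x^*:a)=(0:x\cdot a)$ from that argument, and your step ``maximality forces $(x\cdot a)^*=x^*$'' is the principle unwound for this single family. Working directly shows two things. First, only the colon half of the Oka condition matters; the $i\vee a$ part never enters. Second, no compactness is used: Step~2 works verbatim for arbitrary $a,b\in L$. So you get primeness in the all-elements sense without $1$-compactness, compact generation, or the $c$-lattice hypothesis of Theorem~\ref{anni}. In exchange, the paper's route is uniform: the same principle gives the results on dense elements, compact (finitely generated) elements, meet principal elements, and so on.

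One correction to your side remark: reducedness is not superfluous. The paper defines $a^*=\bigvee\{y\mid a^n\cdot y=0 \text{ for some } n\}$. Your working formula $a^*=\bigvee\{y\mid a\cdot y=0\}$ holds because $L$ is reduced: $a^n y=0$ gives $(ay)^n=a^n y^n\le a^n y=0$, hence $ay=0$. Without reducedness the lemma fails. In $Id(\mathbb{Z}_4)=\{0<(2)<(1)\}$ one gets $0^*=(2)^*=1$ and $1^*=0$, so $0$ is the only, hence maximal, proper $a^*$. But $(2)\cdot(2)=0$ with $(2)\nleq 0$, so $0$ is not prime.

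So what your argument really proves is that maximal proper annihilators $(0:x)$ are prime in any multiplicative lattice. Reducedness is exactly what identifies $(0:x)$ with the paper's $x^*$.
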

	
	All results mentioned above have different proof. Lam and Reyes \cite{lr} introduced the Prime Ideal
	Principle in commutative rings to unify the results about the existence of prime ideals. The
	Principle states that, for suitable ideal family $ \mathscr{F} $,
	every ideal maximal with respect to not being in $ \mathscr{F} $
	is a prime ideal.  On the same line, Kavishwar and Joshi
	\cite{KJ} introduced the Prime Ideal Principle in lattices. 
	
	In this paper, we introduce $S$-prime elements in $V$-lattices, where $S$ is a multiplicatively closed subset of a $V$-lattice $L$. In addition, we introduce $S$-Prime Element Principle to prove that certain elements in  $V$-lattices are
	$S$-prime elements. This principle leads to a direct and uniform approach to the results on the existence of prime elements in multiplicative lattices, when $S=\{1\}$.

	\section{$S$-prime element}
	
	The study of prime ideals and prime elements has been of fundamental importance in both commutative algebra and multiplicative lattice theory. As mentioned earlier, in the context of commutative rings with identity, Hamed and Malek \cite{HM} introduced the notion of an  $S$-prime ideal by associating primeness with a multiplicatively closed subset 
	$S$ of the ring. This generalization provides a natural extension of the classical prime ideal, with the case 
	$S=U(R)$, the set of units of 
	$R$ recovering the usual prime ideals. 
	Motivated by these developments, it is natural to explore corresponding generalizations in the setting of multiplicative lattices. In particular, within a 
	$V$-lattice, the introduction of 
	$S$-prime elements offers a systematic framework for extending the classical notion of prime elements to situations where multiplicatively closed subsets play a central role. This construction not only extends the ring-theoretic theory but also lattice-theoretic framework. In what follows, we formalize the definition of 
	$S$-prime elements in  $V$-lattices, provide illustrative examples, and establish their fundamental properties.	
	
	We introduce the $S$-prime element in $V$-lattices as follows: 
	
	\begin{defn}
Let $S$ be a multiplicatively closed subset of a $V$-lattice $L$.	A proper element $p$ of $L$ satisfying $t \not \leq p$ for all $t \in S$ is called an \textit{$S$-prime  element} if there exists $s \in S$ such that for all $a, b \in L, a \cdot b \leq p$ implies $s \cdot a \leq p$ or $s \cdot b \leq p$.

		If we take $S=\{1\}$, then we get the classical definition of prime elements. That is, 	an element $p \neq 1$ of a $V$-lattice $L$  is \textit{prime} element if $a
		\cdot b \leq p$ implies $a \leq p$ or $b \leq p$ for all $a, ~b \in L$.  		If we consider $L$ to be a $c$-lattice, then this definition of prime element coincides with the following definition. An element $p \neq 1$ of a $c$-lattice $L$  is \textit{prime} element if $a
		\cdot b \leq p$ implies $a \leq p$ or $b \leq p$ for all $a, ~b \in L_*$, where $L_*$ is the set of all compact elements of $L$.

		The set of all $S$-prime elements of $L$ is denoted by $Spec_s(L)$. If $S = \{1\}$, then $Spec_s(L)$ is denoted by $Spec(L)$, the set of all prime elements of $L$.
	\end{defn}
	
	\begin{exa}\label{34.6.} 
		Consider the lattice $N_5$ whose Hasse diagram is shown in
		Figure \ref{fig:n5}.   Note that $N_5$ is a $V$-lattice with the multiplication as meet. If we take $S = \{1 \}$, then $b$ and $c$ are $S$-prime elements of $N_5$, however, $a$ and $0$ are not $S$-prime elements of $N_5$.
	If  $S = \{1, c \}$, then $0, ~ a$ and $b$ are $S$-prime elements of $N_5$, however, $c$ is not an $S$-prime element of $N_5$, as $c\not\leq c\in S$.  Further,  if $S = \{1, ~a \}$, then $0$ and $c$ are $S$-prime elements of $N_5$ but $a$ and $b$ are not  $S$-prime elements of $N_5$. \end{exa}

	\begin{exa}\label{44.6.} 
		Consider the lattice $L=Id(\mathbb{Z}_{12})$, the ideal lattice of the ring $\mathbb{Z}_{12} $.  Observe that  $L$ is a multiplicative lattice with the multiplication as the multiplication  of ideals. If we take $S = \{(1), ~(4) \}$, then $(0), ~ (6)$, and $ (3)$ are $S$-prime elements of $L$, however, $(2)$ and $(4)$ are not $S$-prime elements of $L$. If $S = \{(1), ~(3) \}$, then $(2)$ and $ (6)$ are $S$-prime but $(0), ~(3)$, and $(4)$ are not $S$-prime.
	\end{exa}

	From the above two examples, we have following observations.

	\begin{rem}
		\begin{enumerate}
			\item If $S = \{1\}$ is a multiplicatively closed subset of a $V$-lattice $L$, then prime elements of $L$ and $S$-prime elements of $L$  coincide. 
			
			\item If $p$ is a prime element of a $V$-lattice $L$, then $p$ need not be an $S$-prime element of $L$ for some multiplicatively closed subset $S$ of $L$. In Example \ref{44.6.}, $(2)$ is a prime element of $L$, but $(2)$ is not an $S$-prime element of $L$ with respect to multiplicatively closed subset $S = \{(1), ~(4)\}$. Note that if $p$ is a prime element of  $L$ such that $t \not\leqq p$ for all $t \in S$, then $p$ is an $S$-prime element of $L$ with respect to $S$.
			
			\item If $p$ is an $S$-prime element of a $V$-lattice $L$ with respect to some multiplicatively closed subset $S$, then $p$ need not be a prime element of $L$. In Example \ref{44.6.}, $(6)$ is an $S$-prime element  with respect to multiplicatively closed subset $S = \{(1), ~(4)\}$, but $(6)$ is not a prime element of $L$.
			
			\item If $t$ is any element of multiplicatively closed subset $S$ of $V$-lattice $L$, then $t$ is not  an $S$-prime element of $L$.
			
		\end{enumerate}
	\end{rem}
	

	Let $R$ be a commutative ring with identity and $S$ be a multiplicatively closed subset of $R$. Then $S_L = \{(a) \in Id(R) \;|\; a \in S\}$ is a multiplicatively closed subset of a $V$-lattice $L = Id(R)$,  where $Id(R)$ is  the ideal lattice of $R$. Note that the finitely generated ideals of $R$ are the compact elements of $Id(R)$.
	
	Following result is due to Hamed and Malek \cite{HM}.
	
	\begin{thm}[Hamed and Malek \cite{HM}] \label{1.20.}
		Let $R$ be a commutative ring with identity, $S $ be a multiplicative subset of $R$ and $P$ be an ideal of $R$ disjoint with $S$. Then $P$ is $S$-prime if and only if there exists $s \in S$, such that for all $I, J$ two ideals of $R$, if $IJ \subseteq P$, then $sI \subseteq P$ or $sJ \subseteq P$.
	\end{thm}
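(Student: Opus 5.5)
The equivalence follows from the elementwise definition of Hamed and Malek: $P$ is $S$-prime if there is $s\in S$ such that $ab\in P$ implies $sa\in P$ or $sb\in P$. I would prove the two implications separately. Throughout, $sI$ denotes the ideal $(s)I=\{sx: x\in I\}$, which equals $\{sx : x\in I\}$ since $I$ is an ideal.

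\emph{($\Leftarrow$)} Assume $s\in S$ has the ideal-wise property. Given $a,b\in R$ with $ab\in P$, put $I=(a)$ and $J=(b)$. Then $IJ=(ab)\subseteq P$, so $sI\subseteq P$ or $sJ\subseteq P$. Hence $sa\in P$ or $sb\in P$, and $P$ is $S$-prime with the same $s$.

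\emph{($\Rightarrow$)} Let $s\in S$ witness $S$-primeness of $P$. I claim the same $s$ works for ideals. Suppose $IJ\subseteq P$ but $sI\not\subseteq P$ and $sJ\not\subseteq P$. Choose $a\in I$ with $sa\notin P$ and $b\in J$ with $sb\notin P$. Since $ab\in IJ\subseteq P$, the elementwise property gives $sa\in P$ or $sb\in P$, which is a contradiction.

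The only point needing care is keeping a single $s$ in the forward direction, and this works automatically because the witness $s$ is fixed before the elements are chosen. I do not expect a real obstacle here; the argument mirrors the lattice formulation, with principal ideals playing the role of elements of $L$.
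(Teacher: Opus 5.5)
The paper gives no proof of this statement: it quotes the result from Hamed and Malek and then relies on it in the following theorem (the correspondence between $S$-prime ideals of $R$ and $S_L$-prime elements of $Id(R)$), so there is no argument in the paper to compare yours with. Your proof is correct and is the standard one, using principal ideals $(a),(b)$ for $(\Leftarrow)$, and for $(\Rightarrow)$ the single fixed witness $s$ together with elements $a\in I$, $b\in J$ satisfying $sa,sb\notin P$. The only point you leave implicit is that $P$ is proper in the $(\Leftarrow)$ direction, which holds immediately because $P$ is disjoint from the nonempty set $S$.
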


	\begin{thm}
		Let $R$ be a commutative ring with identity and $S$ a multiplicative subset of $R$. 
		Let $L=Id(R)$ denote the multiplicative lattice of all ideals of $R$, and define 
		$S_L=\{(a)\in L \mid a\in S\}.$
		Then an ideal $P$ of $R$ is an $S$-prime ideal of $R$ if and only if $P$ (viewed as an element of $L$) is an $S_L$-prime element of $L$.
	\end{thm}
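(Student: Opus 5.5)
The plan is to reduce the statement to Theorem~\ref{1.20.} of Hamed and Malek. Two translations are needed: between ``$s\in S$'' and ``$(s)\in S_L$'', and between ``$sI\subseteq P$'' and ``$(s)\cdot I\le P$'' in $L=Id(R)$. For the second, the key observation is that for $s\in R$ and an ideal $I$ of $R$, the product ideal $(s)I$ equals the set $sI=\{si\mid i\in I\}$. Indeed, $sI$ is already an ideal: it is closed under addition, and $r(si)=s(ri)$. Moreover, every element of $(s)I$ is a finite sum $\sum (r_ks)i_k=s\sum r_k i_k\in sI$. Hence $sI\subseteq P$ if and only if $(s)\cdot I\le P$ in $L$.

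Next I check the side conditions in the definition of an $S_L$-prime element. First, $P$ is proper in $L$ exactly when $P\neq R$. Second, $P\cap S=\emptyset$ if and only if $(t)\not\le P$ for all $(t)\in S_L$, since $(t)\subseteq P$ iff $t\in P$. I will also note that $S_L$ is a legitimate multiplicatively closed set, since $(s_1)(s_2)=(s_1s_2)$ and principal ideals are compact. The standing convention that $1\in S$ and $0\notin S$ corresponds to $(1)\in S_L$; the condition $(0)\notin S_L$ will hold whenever $0\notin S$, which is automatic once $P$ is disjoint from $S$.

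For ($\Rightarrow$), let $P$ be $S$-prime. Theorem~\ref{1.20.} gives $s\in S$ such that $IJ\subseteq P$ implies $sI\subseteq P$ or $sJ\subseteq P$. Take $(s)\in S_L$. If $I\cdot J\le P$ in $L$, then $IJ\subseteq P$, so $(s)\cdot I=sI\le P$ or $(s)\cdot J\le P$. Thus $P$ is $S_L$-prime.

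For ($\Leftarrow$), let $P$ be $S_L$-prime, with witness $(s)$ for some $s\in S$. A subtlety is that several $s$ may generate the same principal ideal, but any choice works. Then for all ideals $I,J$ with $IJ\subseteq P$ we get $sI=(s)I\subseteq P$ or $sJ\subseteq P$. By the converse direction of Theorem~\ref{1.20.}, $P$ is $S$-prime. Alternatively, this follows directly by taking $I=(a)$ and $J=(b)$ and using $sa\in s(a)$.

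The only step needing real care is the identity $(s)I=sI$, together with keeping the side conditions (properness and disjointness) aligned. Everything else is a direct transcription.
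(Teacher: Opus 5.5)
Your proposal is correct and follows the same route as the paper. Both directions reduce to the Hamed--Malek characterization (Theorem~\ref{1.20.}) after translating $P\cap S=\emptyset$ into $(t)\not\le P$ for all $(t)\in S_L$, and $sI\subseteq P$ into $(s)\cdot I\le P$; your version is slightly more careful, since you verify $(s)I=sI$ explicitly and fix the witness $s$ uniformly before quantifying over $I,J$, whereas the paper's forward direction chooses $s$ after $I,J$ and leaves the appeal to Theorem~\ref{1.20.} implicit.
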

	
	\begin{proof}
		Suppose $P$ is an $S$-prime ideal of $R$. Then $S\cap P=\varnothing$.  We claim that $t\not\leqq P$ for all $t\in S_L$. Suppose on the contrary that  there exists $t=(a)\in S_L$ with $t\le P$. But then $a\in P\cap S$, a contradiction.  
		Hence $t\not\leqq P$ for all $t\in S_L$.  
		
		Now, let $I,J\in L$ with $IJ \leq P$, i.e., $IJ\subseteq P$. Since $P$ is $S$-prime, there exists $s\in S$ such that $sI\subseteq P$ or $sJ\subseteq P$.  
		Equivalently, $(s)I\leq  P$ or $(s)J\leq P$ in $L$.  
		Thus $P$ is an $S_L$-prime element of $L$.
		
		Conversely, assume that $P$ is an $S_L$-prime element of $L$.  
		By the definition, $t\not\leq P$ for every $t\in S_L$. If some $a\in S\cap P$, then $(a)\in S_L$ with $(a)\le P$, a contradiction. Hence $S\cap P=\varnothing$.  
		
		Further, since $P$ is $S_L$-prime, there exists $(s)\in S_L$ (for some $s\in S$) such that for all $I,J\in L$,
		$IJ\leq  P$ implies that $ (s)I\leq P \ \text{ or }\ (s)J\leq P$.
		That is, $sI\subseteq P$ or $sJ\subseteq P$. Thus $P$ is an $S$-prime ideal of $R$.
	\end{proof}
	
	Thus, the lattice-theoretic notion of $S$-prime elements in $L$  extends the ring-theoretic notion of $S$-prime ideals in $R$.

	\begin{defn} [ \cite{D, SJ, WD}]
		Let $a,b $ be any elements of    a multiplicative lattice $L$. Then $(a:b) =
		\bigvee \{x \;|\;  x\cdot b \leq a\}$. Note that $x \cdot b \leq a
		\Leftrightarrow x \leq (a:b)$.

	\end{defn}

	\begin{lem} \label{5.11.}
		Let $S$ be a multiplicatively closed subset of a multiplicative lattice $L$. If $p$ is proper element of $L$ such that $t \not\leqq p$ for all $t \in S$. Then $p$ is an $S$-prime element of $L$ if and only if $(p : s)$ is a prime element of $L$ for some $s \in S$. 
	\end{lem}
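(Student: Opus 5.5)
The plan is to prove both directions straight from the definitions, using the residual property $x\cdot b\leq a \Leftrightarrow x\leq (a:b)$. This property holds because $L$ is a multiplicative lattice, so multiplication distributes over arbitrary joins.

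\emph{($\Leftarrow$)} Suppose $(p:s)$ is prime for some $s\in S$; I will show that this $s$ witnesses $S$-primeness of $p$. By hypothesis $p$ is proper and $t\not\leq p$ for all $t\in S$, so only the primeness condition needs checking. First, $p\cdot s\leq p\wedge s\leq p$, hence $p\leq (p:s)$. Now let $a\cdot b\leq p$. Then $a\cdot b\leq (p:s)$, so primeness gives $a\leq (p:s)$ or $b\leq (p:s)$. By the residual property this means $s\cdot a\leq p$ or $s\cdot b\leq p$.

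\emph{($\Rightarrow$)} Let $s\in S$ be the witness in the definition of an $S$-prime element. I claim $(p:s)$ is prime.

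\emph{Properness.} If $(p:s)=1$, then $s=1\cdot s\leq p$, contradicting $s\in S$.

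\emph{Key step: $(p:s^2)=(p:s)$.} This is the step I expect to be the main obstacle. The naive argument only yields $s^2\cdot a\leq p$, and we must get back down to $s\cdot a\leq p$.
\begin{itemize}
\item The inclusion $(p:s)\leq(p:s^2)$ is clear, since $s^2\cdot x\leq s\cdot x$.
\item For the reverse, take $x\leq (p:s^2)$, so $x\cdot s^2\leq p$. Apply $S$-primeness to the product of $x$ and $s^2$. This gives $s\cdot x\leq p$ or $s\cdot s^2\leq p$.
\item The second alternative is impossible: $s^3\in S$ because $S$ is multiplicatively closed, and no element of $S$ lies below $p$.
\item Hence $x\leq (p:s)$.
\end{itemize}

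\emph{Primeness.} Let $a\cdot b\leq (p:s)$, i.e.\ $(s\cdot a)\cdot b\leq p$. Apply $S$-primeness to the factors $s\cdot a$ and $b$. This gives $s^2\cdot a\leq p$ or $s\cdot b\leq p$.
\begin{itemize}
\item In the second case, $b\leq (p:s)$.
\item In the first case, $a\leq (p:s^2)=(p:s)$ by the key step.
\end{itemize}
So $(p:s)$ is prime, which completes the argument.
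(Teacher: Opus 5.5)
Your proof is correct and is essentially the paper's argument. The paper also applies $S$-primeness to $(s\cdot c)\cdot d\leq p$, and in the case $s^2\cdot c\leq p$ it applies $S$-primeness once more to rule out $s^3\leq p$; this is exactly your key step $(p:s^2)=(p:s)$, which you have isolated as a separate claim, while the properness check and the converse via $p\leq (p:s)$ match the paper's, and you correctly fix the witness $s$ before quantifying over $a,b$.
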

	
	\begin{proof}
		 First,  we show that $(p:s)$ is a proper element of $L$ for all $s\in S$. Suppose on the contrary that  $(p:s) = 1$ for some $s\in S$. We always  have   $(p:s) \cdot s \leq p$. This inequality with $(p:s) = 1$, gives $s \leq p$, a contradiction to $t \not\leqq p$ for all $t \in S$.

		 Suppose $p$ is an $S$-prime element of  $L$.  Therefore for all $a, b \in L$ with $a \cdot b \leq p$, there exists an element $s \in S$ such that $s \cdot a \leq p$ or $s \cdot b \leq p$. 
		
		Now, we claim that $(p :s)$ is a prime element of $L$. Let $c, d \in L$ such that $c \cdot d \leq  (p:s)$. This gives $(s \cdot c) \cdot d \leq  p$. Since $p$ is an $S$-prime element of $L$, we get $s^2 \cdot c  \leq  p$ or $s \cdot d \leq  p$. If $s^2 \cdot c  \leq  p$, then we get $s^3  \leq  p$ or $s \cdot c  \leq  p$. If $s^3 \leq p$, then we will get a contradiction to $t \not\leqq p$ for all $t \in S$. Therefore  $s \cdot c  \leq  p$. Thus $s \cdot c  \leq  p$  or $s \cdot d  \leq  p$, i.e., $c \leq (p:s)$ or $d \leq (p:s)$. Therefore $(p:s)$ is a prime element of $L$.

		Conversely, suppose that $(p:s)$ is a prime element of $L$ for some $s \in S$. Let $a, b \in L$ with $a \cdot b \leq p$. Since $p \leq (p : s)$. we get $a \cdot b \leq (p : s)$. As $(p : s)$ is a prime element of $L$, we get $a \leq (p :s)$ or  $b \leq (p :s)$, that is $s \cdot a \leq p$ or $s \cdot b \leq p$. This gives that $p$ is an $S$-prime element of $L$.
	\end{proof}


	\section{$S$-Ako and $S$-Oka families in $V$-lattices}
	
	In commutative ring theory, several classical results establish that certain maximal ideals are prime. To provide a unified framework for such ``maximal implies prime'' theorems, Lam and Reyes \cite{lr} introduced the concepts of \textit{Oka families} and \textit{Ako families} of ideals. Using these families, they were able to prove many classical results concerning prime ideals. 
	
	\begin{defn}[Lam–Reyes {\cite[Definitions 2.1–2.2]{lr}}]\label{def:strong-oka-ako}
		Let $R$ be a commutative ring with $1$, and let $Id(R)$ denote the set of all ideals of $R$.
		
		\begin{enumerate}
			\item An ideal family $\mathcal F\subseteq Id(R)$ with $R\in\mathcal F$ is called  a \emph{strongly Oka family} if, for all ideals $I,A\trianglelefteq R$,
			$
			(I,A)\in\mathcal F \ \text{and}\ (I:A)\in\mathcal F \ \Rightarrow\ I\in\mathcal F.$
			
			\item An ideal family $\mathcal F\subseteq Id(R)$ with $R\in\mathcal F$ is called 
			a \emph{strongly Ako family} if, for $a\in R$ and  all ideals $I, B\trianglelefteq R$,
			$(I,a)\in\mathcal F \ \text{and}\ (I,B)\in\mathcal F \ \Rightarrow\ (I,aB)\in\mathcal F.$
		\end{enumerate}
	\end{defn}

	Now, we generalized the concept of Ako family and Oka family to  $S$-Ako and $S$-Oka families in rings and then subsequently in $V$-lattices.	
	
	\begin{defn}[$S$–Ako and $S$–Oka families in rings]\label{def:S-ako-oka-ring}
		Let $R$ be a commutative ring with $1$, let $Id(R)$ denote the set of all ideals of $R$, 
		and let $S\subseteq R$ be a multiplicatively closed set. 
		Put $S^\ast=\{\,Rs:\ s\in S\,\}\subseteq Id(R)$. 
		An ideal family $\mathcal F\subseteq Id(R)$ with $S^\ast\subseteq\mathcal F$ is:
		
		\quad$\bullet$ an $S$–Ako family if, for all $I,A,B\in Id(R)$ and for all $s\in S$,
		$$
		(I,\,sA)\in\mathcal F \ \text{ and }\ (I,\,sB)\in\mathcal F \ \Longrightarrow\ (I,\,AB)\in\mathcal F;
		$$
		
		\quad$\bullet$ an $S$–Oka family if, for all $I,A\in Id(R)$ and for all $s\in S$,
		$$
		(I,\,sA)\in\mathcal F \ \text{ and }\ (I: sA)\in\mathcal F \ \Longrightarrow\ I\in\mathcal F.
		$$
	\end{defn}
	
	\noindent
	Observe that if we take $S=\{1\}$, then the notion of an $S$–Oka family  coincides with that of an Oka family. 
	Similarly, if in the definition of an $S$–Ako family we take $A=(a)$ to be a principal ideal, then the $S$–Ako family reduces to the usual Ako family.
	
	Now, let's consider the multiplicative lattice version of the $S$-Ako and $S$-Oka family.
	\begin{defn}\label{2.1.} Let $S$ be a multiplicatively closed subset of a $V$-lattice $L$.  A  family  $ \mathscr{F}$ of elements in  $L$ with $S \subseteq \mathscr{F}$ is
		said to be an \textit{$S$-Ako family} if for all $s \in S$ and for $i, a, b \in L $, ~$i \vee
		(s \cdot a), ~ i \vee (s \cdot b) \in \mathscr{F}$ implies $i \vee (a \cdot b) \in
		\mathscr{F}$.

		A family $ \mathscr{F}$ of elements in  $L$ with $S \subseteq \mathscr{F}$ is
		said to be an \textit{$S$-Oka family} if for all $s \in S$ and for $i, a \in L $, ~$i \vee
		(s \cdot a), ~ (i : (s \cdot a) )\in \mathscr{F}$ implies $i  \in
		\mathscr{F}$. Note that $(a:b) =
		\bigvee \{x \;|\;  x\cdot b \leq a\}$. Clearly, $x \cdot b \leq a
		\Leftrightarrow x \leq (a:b)$. 
		
		If the multiplicatively closed subset $S=\{1\}$, then we call $S$-Ako family and $S$-Oka family as Ako family and Oka family respectively.  
		
		Note that an Ako family is always Oka.  For this, let $\mathscr{F}$ be an Ako family in $L$ and let 	$i \vee a,~ (i:a) \in \mathscr{F}$ for $i, a \in L$. 	We need to show that $i \in \mathscr{F}$. Let $b = (i:a)$. Since $i \cdot a \leq  i$, 
		we have $i \leq (i:a) = b$, and hence 
		$i \vee b = b = (i:a) \in \mathscr{F}.$
		Since $i \vee a \in \mathscr{F}$ and $i \vee b \in \mathscr{F}$, 
		by the Ako property
		$i \vee (a \cdot b) \in \mathscr{F}.$
		Since $a \cdot b \leq i$, we have 
		$i \vee (a \cdot b) = i$. 
		Hence $i \in \mathscr{F}$.	\end{defn}
	
	First, we prove the relation between ring theoretic $S$-version of Ako and Oka family with the lattice theoretic version. For this, we need to prove the relation of lattice residuation and the {\color{black} quotient (or colon)} ideal in rings.
	
	\begin{lem}\label{lem:residual-equals-colon}
		Let $R$ be a commutative ring with $1$, and let $Id(R)$ be the multiplicative lattice of all ideals of $R$, ordered by inclusion with the multiplication as the ideal multiplication. For $A,B \in Id(R)$, define 
		$(A:B)_{\mathrm{res}} = \bigvee\{\,X \in Id(R) \mid X\cdot B \subseteq A\,\}$. 
		Then $(A:B)_{\mathrm{res}} = (A:B)$, where $(A:B) = \{\,r \in R \mid rB \subseteq A\,\}$, and equivalently, for every $X \in Id(R)$, 
		$X\cdot B \subseteq A \iff X \subseteq (A:B)$.
	\end{lem}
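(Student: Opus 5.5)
The plan is to show that the colon ideal $(A:B)$ is the largest ideal $X$ with $X\cdot B\subseteq A$. Once that is done, it is automatically the join in $Id(R)$ of all such $X$.

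First, I will check that $(A:B)=\{r\in R \mid rB\subseteq A\}$ is an ideal of $R$. If $r,r'\in(A:B)$ and $x\in R$, then for each $b\in B$ we have $(r-r')b=rb-r'b\in A$ and $(xr)b=x(rb)\in A$, because $A$ is an ideal. Also $0\in(A:B)$. Hence $(A:B)\in Id(R)$.

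Next, I will prove the equivalence $X\cdot B\subseteq A \iff X\subseteq (A:B)$ for every $X\in Id(R)$. Recall that $X\cdot B$ is the ideal generated by (equivalently, the set of finite sums of) products $xb$ with $x\in X$, $b\in B$.

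\emph{Forward direction.} If $X\cdot B\subseteq A$, then for each $x\in X$ and $b\in B$ we have $xb\in A$. Thus $xB\subseteq A$, so $x\in(A:B)$.

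\emph{Reverse direction.} If $X\subseteq(A:B)$, then every generator $xb$ of $X\cdot B$ lies in $A$. Since $A$ is closed under finite sums, $X\cdot B\subseteq A$.

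In particular, taking $X=(A:B)$ gives $(A:B)\cdot B\subseteq A$.

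Finally, I will identify the residual. The join in $Id(R)$ of a family of ideals is their sum, i.e., the smallest ideal containing all of them. Since $(A:B)$ itself belongs to the family $\{X \mid X\cdot B\subseteq A\}$, we get $(A:B)\subseteq (A:B)_{\mathrm{res}}$. Conversely, every member $X$ of that family satisfies $X\subseteq(A:B)$ by the forward direction, and $(A:B)$ is an ideal. So the sum of the family is contained in $(A:B)$, giving $(A:B)_{\mathrm{res}}\subseteq(A:B)$. Therefore the two coincide, and the stated equivalence is exactly the one proved above.

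There is no real obstacle here. The only point needing care is that $(A:B)$ is an ideal, since this is what allows the supremum over the family to be attained by a member of the family.
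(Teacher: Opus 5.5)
Your proof is correct and follows essentially the same route as the paper. Both arguments show that $(A:B)$ is an ideal, that every $X$ with $X\cdot B\subseteq A$ lies in $(A:B)$, and that $(A:B)\cdot B\subseteq A$, so $(A:B)$ is the largest member of the family and hence equals its join. The differences are minor: you prove the equivalence first and then deduce $(A:B)_{\mathrm{res}}=(A:B)$, whereas the paper proves the two inclusions first and reads off the equivalence. You also spell out the generators-and-finite-sums argument that the paper leaves as ``by the definition.''
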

	
	\begin{proof}
		Clearly, $(A:B)$ is an ideal of $R$. 
		If $X \in Id(R)$ and $X B \subseteq A$, then $xB \subseteq A$ for all $x \in X$, hence $X \subseteq (A:B)$. Thus every such $X$ is contained in $(A:B)$, giving $(A:B)_{\mathrm{res}} \subseteq (A:B)$. 
		Conversely, $(A:B)B \subseteq A$ by the definition. Hence $(A:B) \leq (A:B)_{\mathrm{res}}$, yielding the reverse inclusion. Therefore $(A:B)_{\mathrm{res}} = (A:B)$, and the equivalence $X B \subseteq A \iff X \subseteq (A:B)$ follows immediately.
	\end{proof}
	
	\begin{lem}\label{thm:S-AKO-Oka-ring-vs-lattice}
		Let $R$ be a commutative ring with $1$, let $S\subseteq R$ be a multiplicatively closed set, and let $Id(R)$ as a multiplicative lattice with join $I\vee J=I+J$, product $I\cdot J=IJ$, and residual $(I:J)$. Put $S^\ast=\{\,Rs:\ s\in S\,\}\subseteq Id(R)$. Let $\mathcal F\subseteq Id(R)$ satisfy $S^\ast\subseteq\mathcal F$. Then:
		\begin{enumerate}
			\item $\mathcal F$ is an $S$–Ako family in the ring sense, i.e., for all $s\in S$ and all $I,A,B\in Id(R)$,
			\[
			(I,\,sA)\in\mathcal F \text{ and } (I,\,sB)\in\mathcal F \ \Rightarrow\ (I,\,AB)\in\mathcal F,
			\]
			if and only if $\mathcal F$ is an $S$–Ako family in $Id(R)$ in the lattice sense, i.e., for all $s\in S$ and all $i,a,b\in Id(R)$,
			\[
			i\vee (Rs\cdot a)\in\mathcal F \text{ and } i\vee (Rs\cdot b)\in\mathcal F \ \Rightarrow\ i\vee (a\cdot b)\in\mathcal F.
			\]
			\item $\mathcal F$ is an $S$–Oka family in the ring sense, i.e., for all $s\in S$ and all $I,A\in Id(R)$,
			\[
			(I,\,sA)\in\mathcal F \text{ and } (I: sA)\in\mathcal F \ \Rightarrow\ I\in\mathcal F,
			\]
			if and only if $\mathcal F$ is an $S$–Oka family in $Id(R)$ in the lattice sense, i.e., for all $s\in S$ and all $i,a\in Id(R)$,
			\[
			i\vee (Rs\cdot a)\in\mathcal F \text{ and } (i: Rs\cdot a)\in\mathcal F \ \Rightarrow\ i\in\mathcal F.
			\]
		\end{enumerate}
	\end{lem}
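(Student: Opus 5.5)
The plan is a pure translation argument: check that each ring-theoretic expression in the two conditions is literally the same ideal as the corresponding lattice-theoretic expression in $Id(R)$. Once that is done, the two implications in (1) are the same statement, and so are the two in (2). The translations needed are these.

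\begin{enumerate}
\item The ideal $(I,J)$ generated by $I\cup J$ equals $I+J=I\vee J$. In particular $(I,sA)=I\vee(sA)$.
\item For $s\in R$ and $A\in Id(R)$, the set $sA=\{sa: a\in A\}$ is an ideal, and $sA=(Rs)A=Rs\cdot A$ as a product of ideals. The inclusion $sA\subseteq (Rs)A$ is clear. Conversely, $(Rs)A$ is generated by the elements $(rs)a=s(ra)$ with $ra\in A$, so it lies in $sA$.
\item The ring colon $(I:sA)=\{r\in R: r\,sA\subseteq I\}$ coincides with the lattice residual $(I:Rs\cdot a)$ when $a=A$. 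This follows from Lemma~\ref{lem:residual-equals-colon} with $B=sA=Rs\cdot A$.
\end{enumerate}

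For (1), I fix $s\in S$ and $I,A,B\in Id(R)$, and write $i=I$, $a=A$, $b=B$. The hypotheses $(I,sA)\in\mathcal F$ and $(I,sB)\in\mathcal F$ become $i\vee(Rs\cdot a)\in\mathcal F$ and $i\vee(Rs\cdot b)\in\mathcal F$. The conclusion $(I,AB)\in\mathcal F$ becomes $i\vee(a\cdot b)\in\mathcal F$. Since $s$ ranges over $S$ on both sides, and $Rs$ ranges over the corresponding elements of $S^\ast$, the two quantified statements are identical. The hypothesis $S^\ast\subseteq\mathcal F$ plays the role of ``$S\subseteq\mathscr F$'' in Definition~\ref{2.1.}, with $S^\ast$ standing for the lattice multiplicatively closed set. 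It is needed only so that both notions are defined for the same $\mathcal F$.

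Part (2) goes the same way. The hypothesis $(I,sA)\in\mathcal F$ becomes $i\vee(Rs\cdot a)\in\mathcal F$. The hypothesis $(I:sA)\in\mathcal F$ becomes $(i:Rs\cdot a)\in\mathcal F$ by the third translation above. The conclusion $I\in\mathcal F$ is literally $i\in\mathcal F$.

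The only step needing any care is the identity $sA=Rs\cdot A$, including the check that $sA$ is an ideal, and its use inside the colon. This is where one might worry that the ring notation $sA$ means the set of products rather than the product ideal. The short argument in the second translation settles it, and then Lemma~\ref{lem:residual-equals-colon} gives the colon identification directly.
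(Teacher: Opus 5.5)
Your proposal is correct and follows the paper's proof. Both identify $(I,sA)=i\vee(Rs\cdot a)$, $(I,AB)=i\vee(a\cdot b)$ and $(I:sA)=(i:Rs\cdot a)$ via Lemma~\ref{lem:residual-equals-colon}, so the ring and lattice conditions become term-by-term identical. Your explicit check that $sA$ is an ideal equal to $(Rs)A$ fills in the step the paper leaves as ``one can prove''.
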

	
	\begin{proof}
		(1) For $i,a\in Id(R)$ and $s\in S$, one can prove that $i\vee (Rs\cdot a)=i+(Rs)a=(I,sA)$ if we put $i=I$ and $a=A$. Similarly, $i\vee (Rs\cdot b)=(I,sB)$ and $i\vee (a\cdot b)=(I,AB)$. Thus the two $S$–Ako statements are term-by-term identical.
		
		(2) As above, $i\vee (Rs\cdot a)=(I,sA)$. By the residual–colon identity in $Id(R)$, $(i:Rs\cdot a)=(I:sA)$. Hence the ring and lattice $S$–Oka statements coincide.
	\end{proof}
	
	For principal ideals and $S=\{1\}$, we get the notions of Ako family and Oka family given by Lam and Reyes \cite{lr}.
	\begin{cor}\label{cor:S-AKO-Oka-principal}
		With notation as above, for $a,b\in R$, $A=Ra$, $B=Rb$, $s\in S$, and $I\in Id(R)$:
		\[
		(I,\,sa)\in\mathcal F \text{ and } (I,\,sb)\in\mathcal F \ \Rightarrow\ (I,\,ab)\in\mathcal F,
		\]
		\[
		(I,\,sa)\in\mathcal F \text{ and } (I: sa)\in\mathcal F \ \Rightarrow\ I\in\mathcal F.
		\]
	\end{cor}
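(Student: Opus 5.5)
This corollary will follow by specializing Lemma~\ref{thm:S-AKO-Oka-ring-vs-lattice} to principal ideals $A=Ra$ and $B=Rb$; that is, we read the two displayed implications as the principal instances of the $S$-Ako and $S$-Oka conditions, valid whenever $\mathcal F$ is an $S$-Ako (respectively $S$-Oka) family with $S^\ast\subseteq\mathcal F$. The proof is a translation of ideal-theoretic expressions, so the plan is to identify each ideal occurring in the corollary with the corresponding ideal in the definition of $S$-Ako and $S$-Oka families in rings (equivalently, in the lattice sense, by Lemma~\ref{thm:S-AKO-Oka-ring-vs-lattice}).

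First, for $s\in S$ and $a\in R$, we check that $sA=s(Ra)=R(sa)$. Hence $(I,sA)=I+R(sa)=(I,sa)$, and likewise $(I,sB)=(I,sb)$. Next, we check that $AB=(Ra)(Rb)=R(ab)$, since the product of two principal ideals is generated by products of generators and $R$ is commutative with $1$. Thus $(I,AB)=(I,ab)$. Substituting these identities into the $S$-Ako implication of Definition~\ref{def:S-ako-oka-ring}, or into part (1) of Lemma~\ref{thm:S-AKO-Oka-ring-vs-lattice} with $i=I$, $a=Ra$, $b=Rb$, gives the first displayed implication.

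For the second implication, we use Lemma~\ref{lem:residual-equals-colon}: the residual $(I:sA)$ in $Id(R)$ equals the colon ideal $\{r\in R \mid r\,sA\subseteq I\}$. Since $sA=R(sa)$, this is $\{r \mid rsa\in I\}=(I:sa)$. Combining this with $(I,sA)=(I,sa)$ and applying the $S$-Oka condition (part (2) of the lemma) yields $I\in\mathcal F$.

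The only step needing care is the colon identity $(I:R(sa))=(I:sa)$. It holds because $r\,R(sa)\subseteq I$ if and only if $rsa\in I$, using that $I$ is an ideal. Everything else is bookkeeping, so I expect no genuine obstacle. Taking $S=\{1\}$ then recovers the Lam--Reyes Ako and Oka conditions.
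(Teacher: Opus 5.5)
Your proposal is correct and is essentially the paper's argument. The paper gives no separate proof; the corollary is stated as the direct specialization of the ring-sense $S$-Ako/$S$-Oka conditions (equivalently Lemma~\ref{thm:S-AKO-Oka-ring-vs-lattice}) to $A=Ra$, $B=Rb$, and your identifications $sA=R(sa)$, $AB=R(ab)$ and $(I:R(sa))=(I:sa)$ supply exactly the bookkeeping it leaves implicit.
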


	With this preparation, we now  illustrate with examples, the concepts of $S$-Ako and $S$-Oka family in multiplicative lattices. 
	\vskip 5pt
	\begin{minipage}{.6\textwidth}

		\begin{exa}\label{4.6.} 
			Consider a lattice $L=Id(\mathbb{Z}_{12})$, the ideal lattice of the ring $\mathbb{Z}_{12} $ whose Hasse diagram is shown in Figure 2. If we take $S = \{(1), ~(4) \}$ and $\mathscr{F} = \{(1), ~ (2),~ (4)\}$, then it is easy to check that $\mathscr{F}$ is an $S$-Ako family {\color{black} as well as an $S$-Oka family} of $L$.
			
			\vskip 5pt 
			
			Now, if we take $S = \{(1)\}$ and $\mathscr{F} =  \{(1),~ (6)\}$, then $\mathscr{F}$ is not an $S$-Ako family of $L$.
			For this, put $s=(1)$, $i=(0)$ and $a=(6)=b$. Then $i \vee
			(s \cdot a), ~ i \vee (s \cdot b) \in \mathscr{F}$, but $i \vee (a \cdot b)\not \in
			\mathscr{F}$.  It is easy to check that $\mathscr{F}$ is an $S$-Oka family of $L$. Thus, $\mathscr{F}$ is an $S$-Oka family, but not an $S$-Ako family of $L$.	
		\end{exa}
	\end{minipage}\hfill
	\begin{minipage}{.3\textwidth}
		\begin{tikzpicture}[scale=.8]
			
			\draw (5,0) -- (4,1) node at (5, -0.5) {$(0)$}; \draw [fill=black] (5,0) circle
			(.1); \draw (5,0) -- (6,1) node at (6.6, 1) {$(6)$};\draw [fill=black] (6,1)
			circle (.1); \draw (6,1) -- (6,2) node at (6.6,2) {$(3)$}; \draw
			(6,1) -- (4,2); \draw [fill=black] (6,2) circle (.1); \draw (4,1) -- (4,2) node
			at (3.5, 1) {$(4)$}; \draw [fill=black] (4,1) circle (.1); \draw (4,2) -- (5,3)
			node at (3.5, 2) {$(2)$}; \draw [fill=black] (4,2) circle (.1); \draw (6,2) --
			(5,3) node at (5, 3.5) {$(1)$}; \draw [fill=black] (5,3) circle (.1); \draw node
			at (5,-1.5) {Figure 2: $L =Id( \mathbb{Z}_{12})$};
		\end{tikzpicture}
	\end{minipage}
	
	\vspace{0.5cm}

	Let $ \mathscr{F}$ be an $S$-Ako or $S$-Oka element family of a $V$-lattice $L$, then $ \mathscr{F'}$ denotes the complement of $ \mathscr{F}$ (i.e. the set consisting of all elements of $L$ that does not belong to $\mathscr{F}$).

	\textbf{\color{black}Throughout this paper, we assume that 
		if $ \mathscr{F}$ is an $S$-Ako or an $S$-Oka family in a $V$-lattice $L$ with 1 compact, then
		$ \mathscr{F'}$ has a maximal element in $L$, i.e., $Max(
		\mathscr{F'})  \not = \emptyset$.
	}

	\begin{thm}[$S$–Prime Element Principle ($S$–PEP)]\label{2.3.}
		Let $L$ be a $V$–lattice with 1 compact, and let $S$ be a multiplicatively closed subset of $L$. 
		If $\mathscr{F}$ is an $S$–Ako family or an $S$–Oka family in $L$, and if for every $m \in Max(\mathscr{F}')$,  $t\nleq m$ for all $t\in S$, 
		then $\mathscr{F}'$ is an \emph{$MSP$–family}; that is,
		$Max(\mathscr{F}') \subseteq Spec_S(L),$
		where $Spec_S(L)$ denotes the set of all $S$–prime elements of $L$.
	\end{thm}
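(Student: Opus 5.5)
We will show that every $m \in Max(\mathscr{F}')$ is an $S$-prime element, and in fact that the witness $s$ in the definition of $S$-prime can always be taken to be $s = 1 \in S$. The proof is by contradiction, using the maximality of $m$ in $\mathscr{F}'$.

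\textbf{Preliminaries.} Fix $m \in Max(\mathscr{F}')$.
\begin{enumerate}
\item $m$ is proper. Since $1 \in S \subseteq \mathscr{F}$ and $m \notin \mathscr{F}$, we have $m \neq 1$.
\item $t \nleq m$ for all $t \in S$. This is exactly the hypothesis of the theorem.
\item Elements strictly above $m$ lie in $\mathscr{F}$. If $x \in L$ and $x > m$, then $x \notin \mathscr{F}'$ by maximality of $m$, so $x \in \mathscr{F}$.
\end{enumerate}
Now suppose $a \cdot b \leq m$ for some $a,b \in L$, and, aiming for a contradiction, that $1\cdot a = a \nleq m$ and $1 \cdot b = b \nleq m$. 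Then $m \vee a > m$ and $m \vee b > m$, so by the third point both $m \vee a$ and $m \vee b$ belong to $\mathscr{F}$.

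\textbf{$S$-Ako case.} Apply the $S$-Ako condition with $s=1$ and $i=m$. It gives $m \vee (a\cdot b) \in \mathscr{F}$. Since $a \cdot b \leq m$, we have $m \vee (a \cdot b) = m$, so $m \in \mathscr{F}$. This contradicts $m \in \mathscr{F}'$.

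\textbf{$S$-Oka case.} Apply the $S$-Oka condition with $s = 1$ and $i = m$. We already know $m \vee (1\cdot a) = m\vee a \in \mathscr{F}$, so it remains to show $(m:a) \in \mathscr{F}$. Recall $(m:a)=\bigvee\{x \mid x\cdot a \leq m\}$.
\begin{enumerate}
\item $m \leq (m:a)$, because $m \cdot a \leq m \wedge a \leq m$ by axiom (2) of a $V$-lattice.
\item $b \leq (m:a)$, because $b \cdot a \leq m$.
\end{enumerate}
Hence $(m:a) \geq m \vee b > m$, the last inequality holding because $b \nleq m$. By the third preliminary point, $(m:a) \in \mathscr{F}$. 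The $S$-Oka property now forces $m \in \mathscr{F}$, again a contradiction.

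In both cases, $a\cdot b \leq m$ implies $1\cdot a \leq m$ or $1\cdot b\leq m$. Therefore $m \in Spec_S(L)$, and so $Max(\mathscr{F}') \subseteq Spec_S(L)$.

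\textbf{Points requiring care.} The main point to check is that the residual behaves correctly in a general $V$-lattice, where multiplication need not distribute over joins. For this reason we never use $(m:a)\cdot a \leq m$. We use only that each individual $x$ with $x\cdot a\le m$ lies below the join $(m:a)$, which holds by definition. The assumption that $1$ is compact is not used in the argument itself; it enters only through the standing assumption that $Max(\mathscr{F}')\neq\emptyset$.
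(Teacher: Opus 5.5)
Your proof is correct and follows essentially the paper's argument: maximality of $m$ in $\mathscr{F}'$ puts $m\vee sa$, $m\vee sb$ and $(m:sa)$ into $\mathscr{F}$. The $S$-Ako (resp.\ $S$-Oka) condition then forces $m=m\vee(a\cdot b)\in\mathscr{F}$ (resp.\ $m\in\mathscr{F}$), a contradiction. The differences are minor: the paper runs the argument with an arbitrary $s$ supplied by the negation of $S$-primeness, while you fix $s=1$, which makes explicit that such $m$ is in fact prime. You also check properness of $m$ explicitly, and, like the paper, you use only the inequality $x\cdot a\le m\Rightarrow x\le(m:a)$ for the residual.
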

	\begin{proof} Let  $i \in Max( \mathscr{F'})$. Suppose $i$ is  not an $S$-prime element of $L$. Then for all $s \in S$, there exist $a, b \in L$ such
		that $a \cdot b \leq i$ with $s \cdot a \nleq i$ and $s \cdot b \nleq i$. This gives $i < (i \vee
		(s \cdot a)), ~i < (i \vee
		(s \cdot b))$.  Since $a \cdot b \leq i$, we have $(s\cdot a) \cdot (s \cdot b) \leq i$, equivalently, we have $s \cdot b \leq i: (s\cdot a)$. Further, if $i = (i:(s \cdot a))$, then   $s \cdot b \leq i$, a contradiction. Hence  $i < (i:(s \cdot a))$. Since $i$ is a maximal element of $\mathscr{F'}$, we have  $(i \vee (s \cdot a))$, $(i \vee (s \cdot b))$
		and $(i:(s \cdot a))$ all belong to $ \mathscr{F}$. However, $i = (i \vee (a
		\cdot b)) \not\in \mathscr{F}$, a contradiction to our assumption that $ \mathscr{F}$ is a 
		$S$-Ako family or $S$-Oka family.\end{proof}
	
	\begin{rem}\label{2.4.} The converse of Theorem \ref{2.3.} need not be true
		in general. Consider the multiplicative lattice of ideals of $< \mathbb{Z}_{12},
		+, \cdot
		>$ with the multiplication as the ideal multiplication.

		Consider,  $S = \{(1)\}$ and $ \mathscr{F}=\{(4), (6), (2), (1)\}$. Then $
		\mathscr{F^{'}}= \{ (0), (3)\}$ is a $MSP$-family (since
		$Max(\mathscr{F^{'}})= \{ (3)\} \subseteq Spec(L)$). But $
		\mathscr{F}$ is not an $S$-Oka family, since $((0) \vee (6)), ((0):(6))=(2) \in
		\mathscr{F}$ but $(0) \not\in \mathscr{F}$. Also, $\mathscr{F}$ is
		not a  $S$-Ako family, since $((0) \vee (6)), ((0) \vee(6)) \in \mathscr{F}$ but
		$((0) \vee ((6) \cdot (6)))=(0) \not\in \mathscr{F}$.
		
	\end{rem}

	\begin{defn}\label{filter}
		Let $\mathscr{F}$ be a family of elements with $1 \in \mathscr{F}$ in a $V$-lattice $L$ with 1 compact, then
		
		\begin{enumerate} 
			\item $ \mathscr{F}$ is a \textit{semi-filter} if for all $i , j \in L$, $j \leq i$ and $j \in \mathscr{F}$
			implies $i \in \mathscr{F}$, \item $ \mathscr{F}$ is a \textit{filter} if $
			\mathscr{F}$ is a semi-filter and if $i, j \in \mathscr{F}$
			implies $i \wedge j \in \mathscr{F}$, \item $ \mathscr{F}$ is
			\textit{$M$-closed} if $i, j \in \mathscr{F}$ implies $i
			\cdot j \in \mathscr{F}$.
		\end{enumerate}
	\end{defn}

	\begin{lem}\label{2.6.}  Let $S$ be a multiplicatively closed subset of a $V$-lattice $L$ with 1 compact. Let $  \mathscr{F}$  be a family of elements  of $L$
		with $S \subseteq \mathscr{F}$.  If $\mathscr{F}$ is a semi-filter family  satisfying $M$-closed property {\color{black}(or a filter family with $M$-closed property)}, then {\color{black} $\mathscr{F}$ is an $S$-Ako and an $S$-Oka family in $L$.}
	\end{lem}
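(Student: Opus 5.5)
The plan is to derive both properties from two facts: $\mathscr{F}$ is closed under products (the $M$-closed property) and upward closed (the semi-filter property). In each case I will form a suitable product of two members of $\mathscr{F}$, show that this product lies below the target element, and conclude that the target is in $\mathscr{F}$. The filter version needs no separate argument, since every filter is in particular a semi-filter. The hypothesis $S\subseteq\mathscr{F}$ is already part of both definitions, so only the two implications in Definition \ref{2.1.} remain.

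\emph{$S$-Ako.} Let $s\in S$ and $i,a,b\in L$ with $i\vee(s\cdot a),\ i\vee(s\cdot b)\in\mathscr{F}$. By $M$-closedness, $x:=(i\vee(s\cdot a))\cdot(i\vee(s\cdot b))\in\mathscr{F}$. I then claim $x\le i\vee(a\cdot b)$. Expanding the product by distributivity of multiplication over joins gives
\[
x=(i\cdot i)\vee(i\cdot s\cdot b)\vee(s\cdot a\cdot i)\vee(s^2\cdot a\cdot b).
\]
The first three terms are $\le i$ because $u\cdot v\le u\wedge v$, and the last is $\le a\cdot b$ by monotonicity together with $s^2\le 1$. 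So $x\le i\vee(a\cdot b)$, and the semi-filter property gives $i\vee(a\cdot b)\in\mathscr{F}$.

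\emph{$S$-Oka.} Let $i\vee(s\cdot a)$ and $(i:(s\cdot a))$ both lie in $\mathscr{F}$. Then $y:=(i\vee(s\cdot a))\cdot(i:(s\cdot a))\in\mathscr{F}$ by $M$-closedness. Expanding again,
\[
y=\bigl(i\cdot(i:(s\cdot a))\bigr)\vee\bigl((s\cdot a)\cdot(i:(s\cdot a))\bigr).
\]
The first term is $\le i$ since $u\cdot v\le u$, and the second is $\le i$ by the defining property $x\le(i:b)\Leftrightarrow x\cdot b\le i$ of the residual. Hence $y\le i$, and the semi-filter property gives $i\in\mathscr{F}$.

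The main obstacle is the expansion step in both cases. The axioms of a $V$-lattice give only monotonicity, $u\cdot v\le u\wedge v$ and $u\cdot 1=u$, not distributivity over joins. Without distributivity, monotonicity still bounds the products from below by the four terms, but not from above. In a multiplicative lattice, which is the setting where the residual $(i:b)$ was introduced, distributivity holds and the argument goes through as written.

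For a general $V$-lattice I would first try to obtain the inequalities $x\le i\vee(a\cdot b)$ and $y\le i$ directly from the residual. For $y$ this amounts to $(i\vee(s\cdot a))\le(i:(i:(s\cdot a)))$, and that in turn needs $(i:c)$ to be a join-closed set of multipliers. If this cannot be done, I would state the lemma for multiplicative lattices, or for $V$-lattices in which $c\cdot(u\vee v)=(c\cdot u)\vee(c\cdot v)$, since that is exactly what the proof uses.
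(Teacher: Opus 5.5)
Your proof follows the paper's proof. In both parts you multiply the two given members of $\mathscr{F}$, bound the product above by the target element, and conclude by upward closure. The paper asserts $(i\vee(s\cdot a))\cdot(i\vee(s\cdot b))\le i\vee(a\cdot b)$ without comment and expands $(i\vee a)(i:a)=i(i:a)\vee a(i:a)$, so it uses distributivity exactly where you flagged it. In the Oka part the paper also reads the hypothesis as holding for all $s\in S$ at once and specializes to $s=1$. Your treatment of a single fixed $s$ is the correct reading of Definition~\ref{2.1.} and costs nothing extra.

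Your caveat is more than a gap in the write-up: for general $V$-lattices the lemma is false, so the rescue you sketch in your last paragraph cannot work. Take the $V$-lattice $N_5$ of Example~\ref{34.6.}, with meet as multiplication, $0<a<b<1$, $0<c<1$, and $c$ incomparable to $a$ and $b$. Let $S=\{1\}$ and $\mathscr{F}=\{b,1\}$. This is an $M$-closed filter containing $S$, and $1$ is compact.
\begin{itemize}
\item Ako fails: with $i=a$, both $a\vee c=1$ and $a\vee b=b$ lie in $\mathscr{F}$, but $a\vee(c\wedge b)=a\notin\mathscr{F}$.
\item Oka fails: $a\vee c=1\in\mathscr{F}$ and $(a:c)=\bigvee\{y\mid y\wedge c\le a\}=b\in\mathscr{F}$, yet $a\notin\mathscr{F}$. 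Here even the adjunction $y\wedge c\le a\iff y\le b$ holds, so the failure comes purely from non-distributivity.
\end{itemize}

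Hence the lemma should be stated for multiplicative lattices, as you propose. One small correction to your alternative hypothesis: binary distributivity $c\cdot(u\vee v)=(c\cdot u)\vee(c\cdot v)$ suffices for the Ako part, but not for the Oka part. The Oka part also uses $(i:c)\cdot c\le i$. In a $V$-lattice this requires the set $\{x\mid x\cdot c\le i\}$ to contain its own join, which is automatic when multiplication distributes over arbitrary joins. The paper's ``Clearly, $x\cdot b\le a\Leftrightarrow x\le(a:b)$'' in Definition~\ref{2.1.} silently assumes this.
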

	
	\begin{proof}  {\color{black} It is easy to prove that $ \mathscr{F}$ is a semi-filter family satisfying $M$-closed property if and only if $ \mathscr{F}$ is  a filter family  satisfying $M$-closed property. Let $ \mathscr{F}$ be a semi-filter family  satisfying $M$-closed property.} We show that  $ \mathscr{F}$ is an $S$-Ako family.  Let $i \vee
		(s \cdot a), ~ i \vee (s \cdot b) \in \mathscr{F}$ for  $i, a, b \in L $ and for all $s \in S$. Since $ \mathscr{F}$ satisfies the $M$-closed property, we get $(i \vee
		(s \cdot a)) \cdot (i \vee (s \cdot b)) \in \mathscr{F}$. We have  $(i \vee
		(s \cdot a)) \cdot (i \vee (s \cdot b)) \leq   i \vee (a \cdot b)$. By the semi-filterness property of $\mathscr{F}$, we have $ i \vee (a \cdot b)\in
		\mathscr{F}$.  Hence $\mathscr{F}$ is an $S$-Ako  family in $L$.
		
		{\color{black} Now, we show that $\mathscr{F}$ is an $S$-Oka family in $L$. Let $i, a, b \in L $ such that $i\vee sa$, $(i:sa)\in \mathscr{F}$ for all $s\in S$. Then $i\vee a, (i:a)\in \mathscr{F}$. Since $ \mathscr{F}$ satisfies the $M$-closed property, we have $(i\vee a)(i:a)\in \mathscr{F}$. Now, $(i\vee a)(i:a)=i(i:a)\vee a(i:a)\leq i\vee i=i$. As $\mathscr{F}$ is semi-filter, we get $i\in \mathscr{F}$. Thus, $\mathscr{F}$ is an $S$-Oka family in $L$.}	
	\end{proof}

	\begin{lem}\label{2.11.}  Let $S$ be a multiplicatively closed subset of a $V$-lattice $L$ with 1 compact.  Let $\{p_{i}\}$ be a fixed subset of $Spec_s(L)$. Then the family
		$ \mathscr{F}= \{j \in L \;|\; (s \cdot j) \nleq p_{i}$, for all $p_i$ and for all $s \in S$ \}.
		Then $ \mathscr{F}$ is an $S$-Ako family of $L$.\end{lem}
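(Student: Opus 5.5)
The plan is to check the two requirements in the definition of an $S$-Ako family (Definition \ref{2.1.}) directly: first $S \subseteq \mathscr{F}$, and then the Ako implication, which I would prove by contraposition using the $S$-primeness of the fixed elements $p_i$. The membership condition for $\mathscr{F}$ is universal: $j \in \mathscr{F}$ iff $s\cdot j \nleq p_i$ for every $s\in S$ and every $p_i$. So $j\notin \mathscr{F}$ means there is a single witness pair $(s',p_k)$ with $s'\cdot j \leq p_k$.

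\emph{Step 1: $S \subseteq \mathscr{F}$.} Let $t \in S$, $s\in S$ and let $p_i$ be one of the fixed elements. Since $S$ is multiplicatively closed, $s\cdot t \in S$. By the definition of an $S$-prime element, $p_i$ satisfies $t'\nleq p_i$ for all $t'\in S$. Hence $s\cdot t \nleq p_i$, and so $t\in\mathscr{F}$.

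\emph{Step 2: the Ako implication.} Let $s\in S$ and $i,a,b\in L$ with $i\vee(s\cdot a),\ i\vee(s\cdot b)\in\mathscr{F}$, and suppose $i\vee(a\cdot b)\notin\mathscr{F}$.
\begin{enumerate}
\item Pick $s'\in S$ and some $p=p_k$ with $s'\cdot(i\vee (a\cdot b))\leq p$. By monotonicity of multiplication (axiom (1) of a $V$-lattice) this gives $s'\cdot i\leq p$ and $(s'\cdot a)\cdot b\leq p$.
\item Let $u\in S$ be the element witnessing the $S$-primeness of $p$. Applying it to the product $(s'\cdot a)\cdot b\leq p$ yields $u\cdot s'\cdot a\leq p$ or $u\cdot b\leq p$.
\item Put $t=u\cdot s'\in S$. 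In the first case, $t\cdot i\leq s'\cdot i\leq p$ and $t\cdot(s\cdot a)\leq u\cdot s'\cdot a\leq p$ (using $x\cdot y\leq x$). Hence
\[
t\cdot\bigl(i\vee(s\cdot a)\bigr)=(t\cdot i)\vee\bigl(t\cdot s\cdot a\bigr)\leq p,
\]
which contradicts $i\vee(s\cdot a)\in\mathscr{F}$.
\item In the second case the same element $t$ works symmetrically: $t\cdot i\leq p$ and $t\cdot s\cdot b\leq u\cdot b\leq p$. This contradicts $i\vee(s\cdot b)\in\mathscr{F}$.
\end{enumerate}
Either way we reach a contradiction, so $i\vee(a\cdot b)\in\mathscr{F}$.

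The main obstacle is the displayed equality in Step 2(3), where $t$ is distributed over the binary join $i\vee(s\cdot a)$. In a bare $V$-lattice multiplication is only monotone, so we only get $t\cdot(i\vee x)\geq (t\cdot i)\vee(t\cdot x)$, which is the wrong direction. I would therefore use distributivity of multiplication over finite joins, as is already done implicitly in the proof of Lemma \ref{2.6.}. This holds, for instance, when $L$ is a multiplicative lattice, and I would state the lemma in that setting. If one insists on a general $V$-lattice, I would try to absorb $i$ into the witness instead, working with $t\cdot s'\cdot(i\vee s\cdot a)$ and exploiting $s'\cdot i\leq p$. However, I expect that without some distributivity over binary joins the argument genuinely fails, so the distributivity hypothesis is the point to flag.
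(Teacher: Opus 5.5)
Your argument is correct once multiplication distributes over binary joins, which is the hypothesis you flag, but it is organized differently from the paper's.

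\textbf{The paper's route.} The paper never checks the Ako implication directly. It shows that $\mathscr F$ is $M$-closed: if $s_1 j_1 j_2\le p_r$, the witness $s_2$ of $p_r$ gives $s_2 s_1 j_1\le p_r$ or $s_2 j_2\le p_r$. It also shows that $\mathscr F$ is a semi-filter, by monotonicity. Both facts hold in any $V$-lattice. It then invokes Lemma~\ref{2.6.}.

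\textbf{Your route.} You run the contrapositive of the Ako condition and build the witness $t=u\cdot s'$ explicitly. The use of $S$-primeness is the same trick in both arguments.

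\textbf{Comparison.} The paper's factoring lets Lemma~\ref{2.6.} be reused for the later families. However, it hides the distributivity inside the inequality $(i\vee sa)(i\vee sb)\le i\vee ab$ in the proof of Lemma~\ref{2.6.}. Your route isolates the single place where distributivity is needed, namely $t\cdot(i\vee x)\le (t\cdot i)\vee(t\cdot x)$ for $t\in S$.

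\textbf{The hypothesis cannot be dropped.} Your suspicion is right: the statement as written is false for general $V$-lattices. Take $M_3=\{0,x,y,z,1\}$ with multiplication $\wedge$ and $S=\{1,x\}$.
\begin{enumerate}
\item $0$ is $S$-prime with witness $x$: if $x\wedge a\ne 0\ne x\wedge b$, then $a,b\ge x$, so $a\wedge b\ne 0$.
\item Taking $\{p_i\}=\{0\}$ gives $\mathscr F=\{x,1\}$.
\item With $s=1$, $i=y$, $a=z$, $b=x$, we get $i\vee a=i\vee b=1\in\mathscr F$ but $i\vee(a\wedge b)=y\notin\mathscr F$.
\end{enumerate}
In the language of your proof, $t=x$ satisfies $t\wedge y=0=t\wedge z$, yet $t\wedge(y\vee z)=x\ne 0$.

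So the lemma genuinely needs a multiplicative lattice, or at least distributivity over finite joins. The paper's proof also uses this tacitly. For $S=\{1\}$ alone, no distributivity is needed.
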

	
	\begin{proof} Let $\{p_{i}\}$ be a fixed subset of $Spec_s(L)$. Since each $p_i$ is an $S$-prime element of $L$, we have $t \nleq p_i$ for all $t \in S$ and for all $p_i$. Hence $S \subseteq \mathscr{F}= \{j \in L \;|\; (s \cdot j) \nleq p_{i}$, for all $p_i$ and for all $s \in S$\}. Let $j_{1}, j_{2} \in \mathscr{F}$. Therefore $(s \cdot j_{1}) \nleq p_{i}$ for all $p_i$ and for all $s \in S$,
		$(s \cdot j_{2}) \nleq p_{i}$ for all $p_i$ and for all $s \in S$. We claim that $ j_1 \cdot j_2 \in \mathscr{F}$. Suppose on the contrary that $j_1 \cdot j_2 \not\in \mathscr{F}$. Therefore there exists an element $s_1 \in S$ and $p_r \in Spec_s(L)$ such that $s_1 \cdot j_1 \cdot j_2 \leq p_r$. Since $p_{r}$ is an $S$-prime
		element, there exists an element $s_2 \in S$ such that  $s_2 \cdot s_1 \cdot j_{1} \leq p_r$ or $ s_2 \cdot j_{2} \leq p_{r}$, a contradiction to $j_1, j_2 \in  \mathscr{F}$. Hence $ j_1 \cdot j_2 \in \mathscr{F}$. This gives 
		$ \mathscr{F}$ is a $M$-closed subset of $L$.

		{\color{black} Now, we prove that $\mathscr{F}$ is a semi-filter. Let $j\leq i$ for $i,j\in L$ and  $j\in \mathscr{F}$. Then $ (s \cdot j) \nleq p_{i}$, for all $p_i$ and for all $s \in S$. If $ (s \cdot i) \leq p_{i}$, for some $p_i$ or for some $s \in S$. Then $ (s \cdot j)\leq (s \cdot i) \leq p_{i}$, for some $p_i$ or for some $s \in S$, which is a contradiction. Therefore, $ (s \cdot i) \nleq p_{i}$, for all $p_i$ and for all $s \in S$. Hence, $\mathscr{F}$ is a semi-filter.
			
			Thus, by Lemma \ref{2.6.}, $ \mathscr{F}$ is an $S$-Ako family of $L$.}
		
	\end{proof}

	\section{Applications of $S$-Prime Element Principle}

	The \textit{Prime Ideal Principle} provides a unifying framework that guarantees the existence of prime ideals under suitable conditions.  The strength of this principle lies in its abstract nature—it applies not only to rings but also to lattices, and other algebraic systems where the notion of primality can be suitably defined. 
	In the context of multiplicative lattices and related ordered structures, it provides a powerful tool to establish the existence  of prime elements  for further structural investigations.
	
	In this section, we introduces the $S$-Prime Ideal Principle in a general setting and explores its consequences in various contexts.
	
	\begin{lem}\label{5.1.}  Let $S$ be a multiplicatively closed subset of a $V$-lattice $L$ with 1 compact.  Then \linebreak $\mathscr{F}=\{i \in L \;|\; s \leq i$ for some $s \in S \}$ is an $S$-Ako family in $L$.\end{lem}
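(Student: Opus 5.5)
The plan is to reduce the statement to Lemma \ref{2.6.}: I will show that $\mathscr{F}=\{i\in L \mid s\le i \text{ for some } s\in S\}$ contains $S$, is a semi-filter, and is $M$-closed. Lemma \ref{2.6.} then gives that $\mathscr{F}$ is an $S$-Ako family (and also an $S$-Oka family).

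\emph{Containment and the semi-filter property.} For any $t\in S$ we have $t\le t$, so $t\in\mathscr{F}$ and hence $S\subseteq\mathscr{F}$. In particular $1\in\mathscr{F}$, since $1\in S$ by the standing assumption. For the semi-filter property, suppose $j\le i$ and $j\in\mathscr{F}$. Then $s\le j$ for some $s\in S$, so $s\le i$ by transitivity, and therefore $i\in\mathscr{F}$.

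\emph{$M$-closedness.} Take $i,j\in\mathscr{F}$ with $s_1\le i$ and $s_2\le j$, where $s_1,s_2\in S$. By the monotonicity axiom (1) of a $V$-lattice, applied twice and using commutativity,
\[
s_1\cdot s_2\le i\cdot s_2 = s_2\cdot i\le j\cdot i=i\cdot j.
\]
Since $S$ is multiplicatively closed, $s_1\cdot s_2\in S$, so $i\cdot j\in\mathscr{F}$.

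\emph{Alternative direct check.} The argument is routine, and nothing seems likely to obstruct it. If one prefers not to rely on Lemma \ref{2.6.}, the $S$-Ako condition can be verified directly. Suppose $s_1\le i\vee(s\cdot a)$ and $s_2\le i\vee(s\cdot b)$. By monotonicity,
\[
s_1\cdot s_2\le (i\vee sa)\cdot(i\vee sb).
\]
The one point needing care is to bound this product by $i\vee(a\cdot b)$, because a $V$-lattice only guarantees monotonicity and not distributivity. Each cross term can be handled separately:
\[
i\cdot(i\vee sb)\le i,\qquad (s\cdot a)\cdot i\le i,\qquad (s\cdot a)\cdot(s\cdot b)\le a\cdot b.
\]
Combining these into a single bound on the product of joins, however, requires distributivity, which holds in a multiplicative lattice but need not hold in a general $V$-lattice. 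This is presumably also implicitly used in the proof of Lemma \ref{2.6.}. So the cleanest route is simply to cite Lemma \ref{2.6.} as stated, and the main step of the proof is the $M$-closedness argument above.
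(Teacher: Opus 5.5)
Your checks of $S\subseteq\mathscr{F}$, the semi-filter property and $M$-closedness ($s_1s_2\le i\cdot s_2\le i\cdot j$) are correct and use only the $V$-lattice axioms. Routing through Lemma~\ref{2.6.} is essentially the paper's argument: the paper just does your ``direct check'' inline, writing $s_1s_2\le (a\vee sb)(a\vee sc)\le a\vee bc$ without justifying the last inequality. The gap is the one you found yourself, and citing Lemma~\ref{2.6.} ``as stated'' does not close it. The proof of that lemma uses the same inequality $(i\vee sa)(i\vee sb)\le i\vee ab$, and this needs multiplication to distribute over finite joins.

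For general $V$-lattices the statement, and Lemma~\ref{2.6.}, is in fact false. Take $L=M_3=\{0,a,b,c,1\}$ with $x\cdot y=x\wedge y$, a finite $V$-lattice (so $1$ is compact), and take $S=\{1\}$. Then $\mathscr{F}=\{1\}$, which is an $M$-closed semi-filter containing $S$. With $s=1$ and $i=a$ we get $a\vee b=a\vee c=1\in\mathscr{F}$, but $a\vee(b\cdot c)=a\vee 0=a\notin\mathscr{F}$. As a consistency check, $a\in Max(\mathscr{F}')$ is not prime, since $b\cdot c\le a$ with $b,c\not\le a$; so the $S$-PEP would be contradicted if the lemma held here.

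So neither your argument nor the paper's proves the lemma as stated. Both become correct once you assume $L$ is a multiplicative lattice, or just that $(x\vee y)\cdot z\le x\cdot z\vee y\cdot z$. Then $(i\vee sa)(i\vee sb)\le i\cdot i\vee i\cdot sb\vee sa\cdot i\vee s^2ab\le i\vee ab$. The right repair is to add that hypothesis explicitly, not to lean on a lemma whose proof you already suspected has the same defect.
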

	
	\begin{proof} Let $(a \vee (s \cdot b)), (a \vee (s \cdot c)) \in \mathscr{F}$ for $a, b , c \in L$ and for all $s \in S$. Then $s_{1} \leq (a \vee (s \cdot b))$ for some $s_{1} \in S$ and $s_{2} \leq (a \vee (s \cdot c))$ for some $s_{2} \in S$. This gives
		$ (s_{1} \cdot s_{2}) \leq ((a \vee (s \cdot b)) \cdot  (a \vee (s \cdot c))) \leq (a
		\vee (b \cdot c))$. Therefore, $a\vee bc\in \mathscr{F}$. Thus, $ \mathscr{F}$ is an $S$-Ako family. \end{proof}
	
	\begin{prop}[Lam–Reyes Proposition  3.1 \cite{lr}]
		Let $S$ be a  multiplicative subset of a commutative ring $R$ with 1. Let 
		$\mathcal F=\{\,I\lhd R \mid I\cap S\not =\varnothing\,\}$. 
		Then $\mathcal F$ is  strongly Ako, and 
		$Max(\mathcal F')\subseteq Spec(R)$.
	\end{prop}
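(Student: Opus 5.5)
The plan is to pass to the ideal lattice and reuse the lattice results already established. Put $L=Id(R)$ and $S_L=\{(s)\mid s\in S\}$. This $S_L$ is multiplicatively closed, since $(s_1)(s_2)=(s_1s_2)$. It contains $(1)=R$, and we may assume $0\notin S$, since otherwise $\mathcal F$ consists of all ideals and the statement is vacuous. An ideal $I$ meets $S$ if and only if $(s)\le I$ for some $s\in S$. Hence $\mathcal F=\{I\in L\mid t\le I \text{ for some } t\in S_L\}$, which is exactly the family in Lemma \ref{5.1.} taken with $S=\{1\}$ in the lattice $L$ (where $1=R$ is compact). I will, however, show the strong Ako property for $\mathcal F$ directly, as it is a one-line check.

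For the strong Ako property, suppose $(I,a)\in\mathcal F$ and $(I,B)\in\mathcal F$. Then there are $s_1=i_1+ra\in S$ and $s_2=i_2+b\in S$ with $i_1,i_2\in I$, $r\in R$ and $b\in B$. Multiplying gives
\[
s_1s_2\in I+(ra)b\subseteq (I,aB).
\]
Since $s_1s_2\in S$, we get $(I,aB)\in\mathcal F$. This is the same computation as in the proof of Lemma \ref{5.1.}, namely $s_1\cdot s_2\le (i\vee a)\cdot(i\vee b)\le i\vee ab$, now with $s=1$. Also $R\in\mathcal F$ because $1\in S$.

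For $Max(\mathcal F')\subseteq Spec(R)$, I apply Theorem \ref{2.3.} with the trivial multiplicatively closed set $\{(1)\}$ and the Ako family $\mathcal F$; Lemma \ref{5.1.} with $S=\{1\}$ supplies this, and the strong Ako property just proved specializes to it. The hypothesis of Theorem \ref{2.3.} requires $R\nleq m$ for every $m\in Max(\mathcal F')$. This holds because $R\in\mathcal F$, so every member of $\mathcal F'$ is a proper ideal. The theorem then gives $Max(\mathcal F')\subseteq Spec_{\{1\}}(L)=Spec(L)$. Prime elements of $Id(R)$ are precisely prime ideals, by the preceding theorem identifying $S_L$-prime elements with $S$-prime ideals, taken with $S=\{1\}$. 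Note that the standing assumption $Max(\mathcal F')\ne\emptyset$ is not needed for the inclusion, because an empty $Max(\mathcal F')$ makes it trivial.

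I expect the main obstacle to be bookkeeping rather than mathematics. One must keep the roles of $S$ separate: the $S$ in $\mathcal F$ is the ring's set, while Theorem \ref{2.3.} is applied with the trivial set $\{1\}$. One must also check that Lemma \ref{5.1.} is valid when $S_L$ plays the role of the set defining $\mathcal F$ and $\{1\}$ plays the role in the Ako condition. Its proof uses only the closure $s_1s_2\in S$ and the inequality $(i\vee a)(i\vee b)\le i\vee ab$, so it adapts without change.
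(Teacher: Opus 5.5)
Your proof is correct. It uses the same two ingredients as the paper, the computation behind Lemma~\ref{5.1.} and Theorem~\ref{2.3.}, but you apply Theorem~\ref{2.3.} with a different multiplicatively closed set, and that difference matters.

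The paper feeds $S^\ast=\{Rs\mid s\in S\}$ into both Lemma~\ref{5.1.} and Theorem~\ref{2.3.}. Read literally, Theorem~\ref{2.3.} then only gives $Max(\mathcal F')\subseteq Spec_{S^\ast}(L)$, i.e.\ that the maximal members of $\mathcal F'$ are $S$-prime ideals. The passage to $Spec(R)$ is not justified in the text, and an $S$-prime ideal need not be prime (e.g.\ $(6)$ in Example~\ref{44.6.}). You separate the set that defines $\mathcal F$ from the set used in the Ako condition and run Theorem~\ref{2.3.} with $\{(1)\}$, so you get genuine primeness directly. Equivalently, the paper's argument is repaired by noting that $(1)\in S^\ast$, so the $S^\ast$-Ako condition already contains the plain Ako condition. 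The paper's route is shorter, a bare citation. Yours proves what is actually stated, and it checks the strong Ako property explicitly, which the paper leaves implicit.

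There are two small slips, neither of which affects correctness.

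First, $\mathcal F$ is not the family of Lemma~\ref{5.1.} with $S=\{1\}$; that family is just $\{R\}$. It is the family of Lemma~\ref{5.1.} with $S=S_L$, or the decoupled variant you describe in your last paragraph.

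Second, the strong Ako property does not specialize to the condition Theorem~\ref{2.3.} needs; it is the other way round. Strong Ako only allows a principal ideal $(a)$, while the proof of Theorem~\ref{2.3.} uses arbitrary $a,b\in L$. This causes no harm, because your one-line computation works verbatim for an arbitrary ideal $A$: from $i_1+a'\in S$ with $a'\in A$ and $i_2+b\in S$ you get $(i_1+a')(i_2+b)\in I+AB$.
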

	
	\begin{proof} Consider the  multiplicative $V$–lattice $L=Id(R)$ (with $1$ compact) and put 
		$S^\ast=\{\,Rs \mid s\in S\,\}\subseteq L$. 
		By Lemma~\ref{5.1.}, the family $\mathscr G \;=\; \{\, J\in L \mid \exists s\in S: Rs\le J \,\}
		\;=\; \{\, J\in Id(R) \mid J\cap S\neq\varnothing \,\}
		$
		is an $S^\ast$–Ako family in $L$ which is also  the ideal family $\mathcal F$ in the statement. Further, for any $m \in Max(\mathcal F')$, we have $t \nleq m$ for all $t \in S^*$. By Theorem \ref{2.3.},  $Max(\mathcal F')\subseteq Spec(R)$.
	\end{proof}

	Let $S=\{1\}$ be a multiplicatively closed subset of a $c$-lattice $L$. 
	If $\mathscr{F}=\{1\}$, then $\mathscr{F}$ is an $S$–Ako family. 
	In this case, the $S$–Prime Element Principle yields the standard conclusion 
	that $Max(L)\subseteq Spec(L)$; that is, every maximal element of a $c$-lattice is prime. 
	It is well known that an ideal $I$ is a maximal (respectively, prime) ideal of a commutative ring $R$ with $1$ 
	if and only if $I$ is a maximal (respectively, prime) element of the multiplicative lattice $Id(R)$. 
	Combining these facts, we obtain that in any commutative ring $R$ with $1$, every maximal ideal is prime.

	\begin{defn} [\cite{ AAJ, MC1, SJ}] 	An element $a $ of a multiplicative lattice $L$ is {\it nilpotent} if $a^{n}= 0$ for some $ n \in
		\mathbb{Z}^{+}$. A multiplicative lattice $L$ is said to be {\it reduced}, if zero is the only nilpotent element of $L$. Let 
		$a$ be any element of a multiplicative lattice $L$, we define $a^*=\bigvee\{x \in L  \;|\: a^n \cdot x=0$ for some $ n \in \mathbb{Z}^{+} \}$.
		If $L$ is reduced, then $a^*=\bigvee\{x \in L \;|\: a \cdot
		x=0\}$.  An element $j$ of multiplicative lattice is called \textit{dense} element if $(0:j) = 0$.
	\end{defn}

	\begin{lem}\label{5.7.} 
		The family $\mathscr{F}=\{\,j\in L \mid j^{*} = 0\,\}$ is an $S$–Ako family in $L$, where $S$ is a multiplicatively closed subset of a $c$–lattice $L$ such that $S\subseteq \mathscr{F}$.
	\end{lem}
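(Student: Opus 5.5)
The plan is to avoid checking the $S$-Ako condition directly and instead verify the hypotheses of Lemma~\ref{2.6.}. We show that $\mathscr{F}=\{j\in L \mid j^*=0\}$ is a semi-filter containing $1$ and is $M$-closed. Since a $c$-lattice has $1$ compact and $S\subseteq\mathscr{F}$ is assumed, Lemma~\ref{2.6.} then gives that $\mathscr{F}$ is an $S$-Ako family. The only tool needed is the defining property of $a^*$: if $a^n\cdot y=0$ for some $n\in\mathbb{Z}^+$, then $y\le a^*$.

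First, $1\in\mathscr{F}$. Indeed $1^n\cdot x=x$, so $1^*=\bigvee\{x \mid x=0\}=0$.

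Second, $\mathscr{F}$ is a semi-filter; this follows from the antitonicity of $(\cdot)^*$. Let $j\le i$ and let $x$ satisfy $i^n\cdot x=0$. By monotonicity of multiplication, $j^n\cdot x\le i^n\cdot x=0$, so $x\le j^*$. Taking the join over all such $x$ gives $i^*\le j^*$. Hence $j^*=0$ forces $i^*=0$.

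Third, and this is the step needing a little care, we show $\mathscr{F}$ is $M$-closed. Let $i,j\in\mathscr{F}$ and let $x$ satisfy $(i\cdot j)^n\cdot x=0$. By commutativity and associativity, $i^n\cdot(j^n\cdot x)=0$. Applying the defining property of $i^*$ to $y=j^n\cdot x$ gives $j^n\cdot x\le i^*=0$, so $j^n\cdot x=0$. Applying it again, now for $j^*$, gives $x\le j^*=0$. Thus every $x$ in the join defining $(i\cdot j)^*$ is $0$, so $(i\cdot j)^*=0$ and $i\cdot j\in\mathscr{F}$.

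With these three facts, Lemma~\ref{2.6.} applies and yields the statement. As a check that does not rely on that lemma, one can also verify the Ako condition directly. From $(i\vee s\cdot a)\cdot(i\vee s\cdot b)\le i\vee(a\cdot b)$, which holds because $s\le 1$ and products lie below each factor, $M$-closedness and the semi-filter property give the conclusion immediately. I do not expect a genuine obstacle; the only delicate point is the nested use of the definition of $^*$ in the $M$-closed step.
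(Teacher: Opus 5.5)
Your proposal is correct and follows the paper's proof. You show that $\mathscr{F}$ is $M$-closed, using the same nested application of the definition of $^*$ to $i^n\cdot(j^n\cdot x)=0$, and that it is a semi-filter, using the same monotonicity argument; you then invoke Lemma~\ref{2.6.}. Your explicit check that $1\in\mathscr{F}$ and your direct verification of the Ako condition are harmless extras; the latter relies on distributivity of multiplication over joins, which holds in a $c$-lattice.
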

	
	\begin{proof}
		
		Let $a,b\in \mathscr{F}$. Then $a^{\star}=0$ and $b^{\star}=0$. Let $x\in L$ such that $(ab)^nx=0$. Then $a^n(b^nx)=0$. Since $a^{\star}=0$, we have $b^nx=0$. Also $b^{\star}=0$ gives $x=0$. This proves $(ab)^{\star}=0$. Hence, $ab\in \mathscr{F}$. We proved that $\mathscr{F}$ is $M$-closed set. Now, we prove that $\mathscr{F}$ is a semi-filter. Let $a\leq b$ and $a\in \mathscr{F}$. Then $a^{\star}=0$. Let $z\in L$ such that $b^nz=0$. Then $a^nz\leq b^nz=0$. This gives $a^nz=0$. Since $a^{\star}=0$, we have $z=0$. Hence, $b^{\star}=0$. Therefore, $b\in \mathscr{F}$. Therefore, $\mathscr{F}$ is a semi-filter. Thus, By Lemma $\ref{2.6.}$, $\mathscr{F}$ is an $S$-Ako family.
	\end{proof}

	\begin{lem}\label{15.7.} The family $ \mathscr{F}= \{j \in L \;|\; (0:j)={0}\}$ (the set of all dense elements of $L$) is an $S$-Ako family in $L$, where $S$ is a multiplicatively closed subset of a $c$-lattice $L$ such that $S\subseteq \mathscr{F}$.
	\end{lem}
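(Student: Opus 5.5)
The plan mirrors the proof of Lemma~\ref{5.7.}: show that $\mathscr{F}=\{j\in L \mid (0:j)=0\}$ is $M$-closed and a semi-filter, and then invoke Lemma~\ref{2.6.}. Since $S\subseteq\mathscr{F}$ is assumed, and $1\in\mathscr{F}$ because $(0:1)=0$ (as $x\cdot 1=x$), the hypotheses of Lemma~\ref{2.6.} concerning $\mathscr{F}$ will be met once these two closure properties are verified. Lemma~\ref{2.6.} then says $\mathscr{F}$ is an $S$-Ako family, and also an $S$-Oka family.

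\textbf{$M$-closedness.} Take $a,b\in\mathscr{F}$, so $(0:a)=0$ and $(0:b)=0$. Let $x\in L$ with $x\cdot(a\cdot b)=0$. By associativity and commutativity, $(x\cdot b)\cdot a=0$, so $x\cdot b\le (0:a)=0$. Hence $x\cdot b=0$, which gives $x\le(0:b)=0$. Thus every $x$ with $x\cdot ab=0$ is $0$, and the join defining $(0:ab)$ is $0$. So $ab\in\mathscr{F}$. This uses only the residuation equivalence $x\cdot b\le a\Leftrightarrow x\le(a:b)$ recorded in the definition of $(a:b)$.

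\textbf{Semi-filter property.} Suppose $a\le b$ with $a\in\mathscr{F}$. If $z\cdot b=0$, then monotonicity of multiplication in a $V$-lattice gives $z\cdot a\le z\cdot b=0$. So $z\le(0:a)=0$, and therefore $(0:b)=0$, i.e.\ $b\in\mathscr{F}$.

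Both steps are routine; there is no real obstacle. The only point needing care is that $(0:j)=0$ is equivalent to the statement ``$x\cdot j=0$ implies $x=0$.'' This holds because $(0:j)\cdot j\le 0$ in a multiplicative (in particular $c$-) lattice, since multiplication distributes over arbitrary joins. With this equivalence in hand, Lemma~\ref{2.6.} finishes the proof.
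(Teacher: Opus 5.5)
Your proposal is correct and follows essentially the same route as the paper. You verify that the dense elements form an $M$-closed semi-filter, using $x\cdot(a\cdot b)=0 \Rightarrow x\cdot b\le(0:a)=0 \Rightarrow x\le(0:b)=0$ and monotonicity for upward closure, and then apply Lemma~\ref{2.6.}; your added remarks that $1\in\mathscr{F}$ and that $(0:j)=0$ is equivalent to ``$x\cdot j=0$ implies $x=0$'' (since $(0:j)\cdot j\le 0$ by distributivity over arbitrary joins) make explicit two points the paper uses tacitly.
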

	\begin{proof}
		Let $a,b\in \mathscr{F}$. Then $(0:a)=0$ and $(0:b)=0$. Let $x\in L$ such that $(ab)x=0$. Then $a(bx)=0$. This gives $bx\leq (0:a)=0$. Therefore, $bx=0$. This implies $x\leq (0:b)=0$. Hence, $x=0$. This proves $(0:ab)=0$. That is $ab\in \mathscr{F}$. We proved that $\mathscr{F}$ is $M$-closed set. Now, we prove that $\mathscr{F}$ is semi-filter. Let $a\leq b$ and $a\in \mathscr{F}$. Therefore, $(0:a)=0$. Let $z\in L$ such that $bz=0$. Then $az=0$. This gives $z=0$. Hence, $(0:b)=0$. Therefore, $b\in \mathscr{F}$. By Lemma $\ref{2.6.}$, $\mathscr{F}$ is an $S$-Ako family.
	\end{proof}

	\begin{thm}\label{5.8.} Let $L$ be a $c$-lattice, an element maximal with respect to not being dense element is  prime.\end{thm}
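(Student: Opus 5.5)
The plan is to specialize the $S$-Prime Element Principle (Theorem \ref{2.3.}) to $S=\{1\}$ and the family of dense elements. Put $S=\{1\}$, a multiplicatively closed subset of the $c$-lattice $L$ with $1\in S$ and $0\notin S$; the latter assumes $L$ is nontrivial. Let $\mathscr{F}=\{j\in L \mid (0:j)=0\}$. The element $1$ is dense: $x\cdot 1=x$, so $x\cdot 1=0$ forces $x=0$, and hence $(0:1)=0$. Thus $S\subseteq\mathscr{F}$, and Lemma \ref{15.7.} tells us that $\mathscr{F}$ is an $S$-Ako family, in particular an $\{1\}$-Ako family, i.e. an Ako family.

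Next I check the remaining hypotheses of Theorem \ref{2.3.}. In a $c$-lattice the top element $1$ is compact, and every multiplicative lattice is a $V$-lattice (Remark \ref{1.7.}). The standing assumption of the paper guarantees $Max(\mathscr{F}')\neq\emptyset$; in any case the statement only concerns elements that are already maximal in $\mathscr{F}'$. For $m\in Max(\mathscr{F}')$ we must verify $t\nleq m$ for all $t\in S$, i.e. $1\nleq m$. This holds because $m\in\mathscr{F}'$ and $1\in\mathscr{F}$, so $m\neq 1$.

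Theorem \ref{2.3.} then gives $Max(\mathscr{F}')\subseteq Spec_S(L)=Spec(L)$. By Remark (1) after Example \ref{44.6.}, $\{1\}$-prime elements are exactly the prime elements, which is the desired statement. I also want the notion of prime in the $c$-lattice sense, tested on compact elements, to agree with the $V$-lattice sense used in Theorem \ref{2.3.}. The paper notes this coincidence for $c$-lattices, and it follows from compact generation and distributivity of multiplication over joins.

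The main (minor) obstacle is confirming that ``$s\cdot a\le m$'' with $s=1$ collapses to ``$a\le m$'' in the definition of $S$-prime, using $a\cdot 1=a$. This is immediate, so no real difficulty is expected.
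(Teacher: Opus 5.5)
Your proposal is correct and follows exactly the paper's route: the paper proves this in one line by combining Lemma \ref{15.7.} (the dense elements form an $S$-Ako family) with the $S$-Prime Element Principle (Theorem \ref{2.3.}) for $S=\{1\}$. Your extra checks are precisely the hypotheses the paper leaves implicit: that $1$ is dense, that every $m\in Max(\mathscr{F}')$ satisfies $1\nleq m$, and that $\{1\}$-prime coincides with prime.
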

	
	\begin{proof}  follows from   Lemma \ref{15.7.} and $S$-Prime Element Principle.\end{proof}
	
	As an immediate consequence of the above result is the following corollary.
	
	\begin{cor}
		Let $R$ be a commutative ring with 1. An ideal maximal with respect to not being dense is prime.
	\end{cor}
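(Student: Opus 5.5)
The plan is to reduce the corollary to Theorem~\ref{5.8.}, applied to the multiplicative lattice $L=Id(R)$ with $S=\{1\}$, i.e.\ $S=\{R\}$ in $Id(R)$. This requires three checks.

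\emph{First, $Id(R)$ is a $c$-lattice.} It is a complete multiplicative lattice, with join $I+J$, arbitrary joins given by sums, and ideal multiplication distributing over arbitrary sums. Its compact elements are exactly the finitely generated ideals, and every ideal is the sum of the principal ideals of its elements, so $Id(R)$ is compactly generated. The top element $R=(1)$ is compact, and a product of two finitely generated ideals is finitely generated, since $(a_1,\dots,a_m)(b_1,\dots,b_n)$ is generated by the products $a_ib_j$.

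\emph{Second, the lattice notions match the ring notions.} An ideal $I$ is dense in the ring sense when its annihilator $(0:I)=\{r\in R\mid rI=0\}$ is zero. By Lemma~\ref{lem:residual-equals-colon}, the lattice residual $(0:I)$ in $Id(R)$ coincides with this colon ideal. Hence $I$ is a dense element of $Id(R)$ exactly when $I$ is a dense ideal of $R$. Similarly, an ideal is a prime ideal of $R$ if and only if it is a prime element of $Id(R)$. For this I would use the standard fact recalled in the paper: $IJ\subseteq P$ implies $I\subseteq P$ or $J\subseteq P$ is equivalent to the elementwise definition, and this is the case $S=\{1\}$ of the theorem identifying $S$-prime ideals with $S_L$-prime elements.

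\emph{Third, the hypotheses of Lemma~\ref{15.7.} hold.} That lemma needs $S\subseteq\mathscr{F}$, and here $S=\{R\}$ with $(0:R)=0$, so $R$ is dense. The paper's standing assumption that $Max(\mathscr F')\neq\emptyset$ is not actually needed, because the corollary concerns a given maximal non-dense ideal.

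With these checks done, let $I$ be an ideal maximal with respect to not being dense. Then $I$ is a maximal element of $\mathscr F'$ in $Id(R)$, where $\mathscr F$ is the family of dense elements. Theorem~\ref{5.8.} then gives that $I$ is a prime element of $Id(R)$, hence a prime ideal of $R$.

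The one point to verify carefully is the side condition of Theorem~\ref{2.3.}, which is used in the proof of Theorem~\ref{5.8.}: $t\nleq m$ for every $t\in S$. With $S=\{1\}$ this says $m\neq R$, which holds because $R$ is dense, so $I\neq R$. Everything else is routine bookkeeping.
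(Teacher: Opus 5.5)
Your proposal is correct and follows the paper's route. The paper derives this corollary as an immediate consequence of Theorem~\ref{5.8.} (that is, Lemma~\ref{15.7.} together with the $S$-Prime Element Principle for $S=\{1\}$) applied to $L=Id(R)$, and you have made explicit the translation steps it leaves implicit: $Id(R)$ is a $c$-lattice, density and primeness in $Id(R)$ agree with the ring notions via Lemma~\ref{lem:residual-equals-colon}, and the side condition $m\neq R$ holds because $R$ is dense.
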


	\begin{thm}\label{15.8.} Let $L$ be a $c$-lattice, an element maximal with respect to being zero-divisors of $L$ is a prime element of $L$.\end{thm}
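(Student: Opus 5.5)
Theorem \ref{15.8.} follows the same pattern as Theorem \ref{5.8.}: we realise the set of zero-divisors as the complement of an $S$-Ako family with $S=\{1\}$, and then apply the $S$-Prime Element Principle (Theorem \ref{2.3.}).

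We read ``zero-divisor'' in the lattice sense: $j\in L$ is a zero-divisor if $j\cdot x=0$ for some $x\neq 0$, that is, $(0:j)\neq 0$. The elements that are not zero-divisors are then exactly the dense elements, so the family of zero-divisors is $\mathscr{F}'$, where
\[
\mathscr{F}=\{j\in L \mid (0:j)=0\}.
\]
An element maximal with respect to being a zero-divisor is therefore precisely an element of $Max(\mathscr{F}')$.

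The argument then runs as follows.

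\textbf{Step 1.} Take $S=\{1\}$. We need $1\in\mathscr{F}$, which holds because $(0:1)=0$. Lemma \ref{15.7.} then shows that $\mathscr{F}$ is an $S$-Ako family. The hypothesis that $1$ is compact is part of the definition of a $c$-lattice.

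\textbf{Step 2.} We check the side condition of Theorem \ref{2.3.}: for $m\in Max(\mathscr{F}')$ we need $1\nleq m$, i.e.\ $m\neq 1$. This holds because $1\in\mathscr{F}$, so $1\notin\mathscr{F}'$. By the standing assumption of the paper, $Max(\mathscr{F}')\neq\emptyset$ whenever it is needed.

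\textbf{Step 3.} Theorem \ref{2.3.} gives $Max(\mathscr{F}')\subseteq Spec_S(L)$. For $S=\{1\}$, $S$-prime elements coincide with prime elements, so every maximal zero-divisor element is prime. In a $c$-lattice, primeness tested on all elements agrees with primeness tested on compact elements, as remarked after the definition of $S$-prime elements, so the conclusion holds in either sense.

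The only delicate point is the identification in the first paragraph: that the zero-divisors are exactly the complement of the dense elements, and that ``maximal with respect to being a zero-divisor'' means maximal in $\mathscr{F}'$. Once this is fixed, the statement is an immediate instance of the $S$-Prime Element Principle.
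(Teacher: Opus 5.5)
Your proposal is correct and follows the paper's proof. The paper likewise identifies the non-zero-divisors with the dense elements ($x\notin Z(L)$ iff $(0:x)=0$) and then cites Theorem \ref{5.8.}, which is exactly Lemma \ref{15.7.} combined with the $S$-Prime Element Principle for $S=\{1\}$; you just unpack that citation, and your appeal to the standing assumption $Max(\mathscr{F}')\neq\emptyset$ is harmless but not needed, since the statement is vacuous when no maximal zero-divisor exists.
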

	
	\begin{proof} Let $Z(L)=\{x\in L\;|\; x\cdot y=0 \text{ for some nonzero } y \in L\}$ be the set of all zero-divisors. Let $\mathscr{F}=L\setminus Z(L)$ be the set of all non zero-divisors of $L$. Clearly, $x \in \mathscr{F}$ if and only if  $(0:x) = 0$. Therefore $\mathscr{F}$ is the set of all dense elements of $L$. Hence proof follows from   Theorem \ref{5.8.}.\end{proof}
	
	\begin{defn}\label{1.65.} 
		
		A proper element $a$ (i.e., $a\not=1$) of a  multiplicative lattice $L$ is an \textit{annihilator element}, if $a = (0:x)$ for some nonzero $x \in L$.	
		
	\end{defn}
	
	\begin{rem}
		Let $S = \{1\}$, a multiplicatively closed subset  of a $c$-lattice $L$. Then   the set $\mathscr{F}$ of all non annihilator elements of $L$ need not be $S$-Ako family of $L$. \end{rem}
	\begin{minipage}{.6\textwidth}
		\begin{exa}
			Consider a lattice $K$ whose Hasse diagram is shown in Figure
			3. On $K$, define the trivial multiplication $x \cdot y = 0 = y \cdot
			x$ for every $x, ~y \not\in \{1\} $ and $x \cdot 1 = x = 1 \cdot x$ for
			every $x \in K$. It is easy to see that $K$ is a multiplicative
			lattice. If we take $S = \{1\}$ and  $\mathscr{F} = \{a,~ b,~c, ~1\}$, set of all non annihilator elements of $K$, then $\mathscr{F}$ is not an $S$-Ako in $K$. As $(0 \vee a), (0 \vee b) \in \mathscr{F}$, but $0 \vee (a \cdot b)  =  0  \not\in \mathscr{F} $. Hence $\mathscr{F}$ is not an $S$-Ako family. 
		\end{exa}
	\end{minipage}	\hfill
	\begin{minipage}{.4\textwidth}
		\begin{center}
			\begin{tikzpicture}[scale = .81]
				\draw (5,0) -- (4,1) node at (5, -0.4) {$0$}; \draw [fill=black] (5,0) circle
				(.1); \draw (5,0) -- (6,2) node at (6, 2.3) {$c$}; \draw [fill=black] (6,2)
				circle (.1); \draw (4,1) -- (4,2) node at (3.7, 1) {$a$}; \draw [fill=black]
				(4,1) circle (.1); \draw (4,2) -- (5,3) node at (3.7, 2) {$b$};
				\draw [fill=black] (4,2) circle (.1); \draw (6,2) -- (5,3) node at (5.3, 3)
				{$d$}; \draw [fill=black] (5,3) circle (.1); \draw (5,3) -- (5,4) node at
				(5,4.3) {$1$}; \draw [fill=black] (5,4) circle (.1); \draw node at (5,-1)
				{Figure 3: A multiplicative lattice $K$};
			\end{tikzpicture}
		\end{center}
	\end{minipage}

	\begin{thm}\label{anni}
		Let $S = \{1\}$, a multiplicatively closed subset  of a $c$-lattice $L$. Then an element  maximal with respect to being annihilator element of $L$ is a prime element of $L$.
	\end{thm}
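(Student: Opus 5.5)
Let $p$ be maximal among the annihilator elements of $L$, say $p=(0:x)$ with $x\neq 0$. The remark above shows that the family of non-annihilator elements need not be Ako, so Theorem~\ref{2.3.} cannot simply be quoted. Instead I would run the argument from the proof of the $S$-Prime Element Principle by hand, with $S=\{1\}$, using the residual $(p:a)$, which is the Oka-type ingredient. Since $p$ is proper by Definition~\ref{1.65.}, it remains to show that $a\cdot b\le p$ forces $a\le p$ or $b\le p$. Because $L$ is a $c$-lattice, it suffices to check this for compact $a,b$, although the argument below does not actually use compactness.

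\textbf{Step 1 (annihilator of a nonzero multiple).} Suppose $a\cdot b\le p$ and $a\nleq p=(0:x)$. Then $a\cdot x\neq 0$. I claim that $(0:a x)$ is an annihilator element containing $p$.
\begin{itemize}
\item It contains $p$: if $y\cdot x=0$, then $y\cdot(a\cdot x)=a\cdot(y\cdot x)=0$, so $p\le (0:ax)$.
\item It is proper: if $(0:ax)=1$, then $ax=1\cdot ax=0$, a contradiction.
\end{itemize}
Hence $(0:ax)$ is an annihilator element with $(0:ax)\ge p$. Maximality of $p$ then gives $(0:ax)=p$.

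\textbf{Step 2 (conclude).} From $a\cdot b\le p=(0:x)$ we get $(a\cdot b)\cdot x=0$, i.e.\ $b\cdot(a\cdot x)=0$. Therefore $b\le (0:ax)=p$, which proves that $p$ is prime.

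\textbf{Consistency with the principle.} This is exactly the proof of Theorem~\ref{2.3.}, in which $(p:a)$ plays the role of the element strictly above $p$. Indeed $(p:a)=((0:x):a)=(0:ax)$, so one can phrase the argument as an Oka-style argument on the residual. I would remark that the family of non-annihilator elements fails the Ako condition but still satisfies the single instance needed at maximal elements.

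\textbf{Main obstacle.} The only delicate point is properness of $(0:ax)$. This requires $ax\neq 0$, which is exactly the hypothesis $a\nleq p$. The identity $((0:x):a)=(0:ax)$ uses only associativity, commutativity, and the residuation property $y\le (c:d)\iff yd\le c$, all of which hold in a multiplicative lattice.
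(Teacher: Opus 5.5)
Your proof is correct, and its core is the same as the paper's. Both arguments hinge on the identity $((0:x):a)=(0:a\cdot x)$ and on the fact that $a\nleq (0:x)$ exactly when $a\cdot x\neq 0$. That fact makes $(0:a\cdot x)$ a proper element, hence an annihilator element.

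The difference is one of packaging, but your stated reason for avoiding Theorem~\ref{2.3.} is wrong. That theorem applies to $S$-Ako \emph{or} $S$-Oka families. The paper shows that the family $\mathscr{F}$ of non-annihilator elements is a genuine Oka family, not merely one satisfying ``the single instance needed at maximal elements.'' Suppose $i\vee j,(i:j)\in\mathscr{F}$ but $i=(0:x)$ with $x\neq 0$. Either $x\cdot j=0$, so $j\le i$ and $i\vee j=i\notin\mathscr{F}$; or $x\cdot j\neq 0$, and then $(i:j)=(0:x\cdot j)\notin\mathscr{F}$. With this in hand the paper simply quotes Theorem~\ref{2.3.}, whereas you rerun its proof at a single maximal element.

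The paper's route records a fact about the whole family, in the uniform style the paper is promoting. Yours is self-contained and makes explicit two points the paper leaves implicit. First, $(0:a\cdot x)\neq 1$ precisely because $a\cdot x\neq 0$. Second, no compactness is used, and indeed nothing beyond residuation in a multiplicative lattice.
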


	\begin{proof}
		Let $\mathscr{F}$ be a set of all non annihilator elements of $L$. Since $1$ is not an annihilator element, $S \subseteq \mathscr{F}$. We claim that $\mathscr{F}$ is an $S$-Oka family. Let $(i \vee j), (i : j) \in \mathscr{F}$. Therefore $(i \vee j) \not=  (0 : x)$ and $ (i : j) \not=  (0 : x)$ for all nonzero $x \in L$. We claim that $i \in \mathscr{F} $. Suppose on the contrary that $i \not \in \mathscr{F} $. So $i = ( 0 : x)$ for some nonzero $x \in L$. Thus, $(i : j) = ((0 : x) : j) = ( 0 : (x \cdot j))$. Note that $(x \cdot j) \not = 0$. If $(x \cdot j)  = 0$, then $j \leq (0 : x) = i$. This gives $i \vee j = i = ( 0: x)$ for nonzero $x \in L$, a contradiction to $(i \vee j) \in \mathscr{F}$. So  $(i : j) = ( 0 : (x \cdot j))$, where $(x \cdot j) \not = 0$, a contradiction to $(i : j ) \in \mathscr{F}$. Hence $\mathscr{F} $ is an $S$-Oka family of $L$. By Theorem \ref{2.3.}, $ \mathscr{F'}$ is a \textit{$MSP$-family}, that is, $Max(
		\mathscr{F'}) \subseteq Spec(L)$, i.e., an element  maximal with respect to being annihilator element of $L$ is  prime. \end{proof}
	As a consequence of Theorem \ref{anni}, we have:
	
	\begin{cor}
		Let $R$ be a commutative ring with 1. Then an ideal  maximal with respect to being annihilator ideal of $R$ is a prime ideal of $R$.
	\end{cor}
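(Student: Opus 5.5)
The plan is to deduce the corollary from Theorem \ref{anni} applied to the multiplicative lattice $L=Id(R)$, with $S=\{1\}$ (that is, $S=\{R\}$ in $Id(R)$). First we check that $Id(R)$ satisfies the hypotheses of Theorem \ref{anni}. It is a complete lattice under inclusion, and ideal multiplication is commutative, associative, and distributes over arbitrary sums. The identity is $R$. The compact elements are exactly the finitely generated ideals. Hence $Id(R)$ is compactly generated, $1=R=(1)$ is compact, and a product of two finitely generated ideals is finitely generated. So $Id(R)$ is a $c$-lattice.

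Next we translate the notions. By Lemma \ref{lem:residual-equals-colon}, the lattice residual $(0:X)$ in $Id(R)$ is the ring annihilator $\operatorname{Ann}_R(X)=\{r\in R \mid rX=0\}$. Therefore an ideal $I$ is an annihilator element of $Id(R)$ in the sense of Definition \ref{1.65.} exactly when $I\neq R$ and $I=\operatorname{Ann}(X)$ for some nonzero ideal $X$. This matches the usual notion of an annihilator ideal. The ordering in both settings is inclusion, so an ideal that is maximal among annihilator ideals is a maximal annihilator element of $Id(R)$.

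Finally, Theorem \ref{anni} shows that such an element is a prime element of $Id(R)$. We then use the standard fact, already recalled in the paper, that an ideal is prime in $R$ if and only if it is a prime element of $Id(R)$. For the nontrivial direction, suppose $ab\in P$ with $a,b\notin P$. Then $(a)(b)\subseteq P$ while $(a),(b)\not\subseteq P$, which contradicts primeness of $P$ as an element. This completes the deduction.

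The only delicate step is matching the two definitions of annihilator ideal. Some authors allow $\operatorname{Ann}(x)$ only for a single nonzero element $x$. This causes no problem, since $\operatorname{Ann}(X)=\bigcap_{x\in X}\operatorname{Ann}(x)$ and every $\operatorname{Ann}(x)$ equals $\operatorname{Ann}((x))$. If the corollary is read with the element-wise definition, we would instead apply the same argument to that family, so its maximal members are again prime.
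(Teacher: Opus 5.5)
Your proposal is correct and follows the paper's route: the paper presents this corollary as an immediate consequence of Theorem~\ref{anni} applied to the $c$-lattice $Id(R)$. You have made explicit the translations it leaves implicit, namely that the residual $(0:X)$ is $\operatorname{Ann}_R(X)$ and that prime elements of $Id(R)$ are exactly the prime ideals. One small remark on your last paragraph: rather than rerunning the argument on the element-wise family, where the Oka step would produce $\operatorname{Ann}(xJ)$ and this is generally not a single-element annihilator, note that the two families have the same maximal members, because for any nonzero $x\in X$ the proper ideal $\operatorname{Ann}(x)$ contains $\operatorname{Ann}(X)$.
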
	
	
	\begin{defn}[{G. C\u{a}lug\u{a}reanu \cite{C}}] An element  $a$ of a  lattice $L$ is called an \textit{essential} element if there is no nonzero $x \in L $ such that  $a \wedge x  = 0$. \end{defn}
	
	\begin{thm}
		Let $S = \{1\}$, a multiplicatively closed subset  of a reduced $c$-lattice $L$. Then an element  maximal with respect to not being essential element of $L$ is a prime element of $L$.
	\end{thm}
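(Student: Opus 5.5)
The plan is to apply the $S$-Prime Element Principle (Theorem~\ref{2.3.}) with $S=\{1\}$ to the family
$$\mathscr{F}=\{\,j\in L \mid j \text{ is an essential element of } L\,\}.$$
Then $\mathscr{F}'$ is exactly the set of non-essential elements, so the statement says precisely that $Max(\mathscr{F}')\subseteq Spec(L)$. To show that $\mathscr{F}$ is an $S$-Ako family, I will use Lemma~\ref{2.6.}, so it suffices to prove three things: $1\in\mathscr{F}$ (so that $S\subseteq\mathscr{F}$), $\mathscr{F}$ is a semi-filter, and $\mathscr{F}$ is $M$-closed.

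The first two are routine. For $x\neq 0$ we have $1\wedge x=x\neq 0$, so $1$ is essential; here I assume $L$ is nontrivial, since otherwise there is nothing to prove. For the semi-filter property, let $a\le b$ with $a$ essential. For any nonzero $x$ we get $b\wedge x\ge a\wedge x\neq 0$, so $b$ is essential.

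The main step, and the only place reducedness enters, is $M$-closedness. Let $a,b$ be essential and suppose $ab\wedge x=0$ for some nonzero $x$. Since $a$ is essential, $a\wedge x\neq 0$. Since $b$ is essential, $y:=b\wedge(a\wedge x)\neq 0$. Because $y\le a$ and $y\le b$, monotonicity of multiplication gives $y^2=y\cdot y\le a\cdot b$. Also $y^2\le y\le x$. Hence $y^2\le ab\wedge x=0$, so $y$ is nilpotent, and reducedness forces $y=0$, a contradiction. Therefore $ab$ is essential, $\mathscr{F}$ is $M$-closed, and Lemma~\ref{2.6.} shows that $\mathscr{F}$ is an $S$-Ako family.

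To finish, I check the hypothesis of Theorem~\ref{2.3.}. With $S=\{1\}$ it requires $1\nleq m$ for every $m\in Max(\mathscr{F}')$, i.e. $m\neq 1$. This holds because $1\in\mathscr{F}$. Since $1$ is compact in a $c$-lattice and, by the standing assumption, $Max(\mathscr{F}')\neq\emptyset$, Theorem~\ref{2.3.} gives $Max(\mathscr{F}')\subseteq Spec_S(L)=Spec(L)$. Thus every element maximal with respect to not being essential is prime. I expect the only real obstacle to be the $M$-closedness argument, specifically finding the element $y=a\wedge b\wedge x$ whose square lies below $ab\wedge x$.
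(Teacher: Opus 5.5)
Your proof is correct and follows the same route as the paper: you show that the essential elements form an $M$-closed semi-filter containing $1$, then apply Lemma~\ref{2.6.} and the $S$-Prime Element Principle (Theorem~\ref{2.3.}). Your $M$-closedness step uses the single element $y=a\wedge b\wedge x$ and needs reducedness only once, which is slightly cleaner than the paper's argument (it passes through $i\cdot j\cdot k=0$ and invokes reducedness twice), and your semi-filter check works directly with meets instead of going through products.
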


	\begin{proof}
		Let $\mathscr{F}$ be a set of all  essential elements of $L$. Since $1$ is a essential element of $L$, $S \subseteq \mathscr{F}$. We claim that $\mathscr{F}$ is a {\color{black} semifilter} that satisfies $M$-closed property. Let $i, j \in \mathscr{F}$ and suppose that $i \cdot j \not \in \mathscr{F}$. So there exists a non-zero element $k \in L$ such that $((i \cdot j) \wedge k) = 0$. Since  $(i \cdot j \cdot k) \leq ((i \cdot j) \wedge k) = 0$ implies $(i \cdot j \cdot k) = 0$. Since $L$ is reduced, we have $i \wedge (j \cdot k)=0$. 
		This together with $i$ is an essential element of $L$, we get $j \cdot k = j \wedge k =0$. Again, $j$ is an essential element of $L$, we get $k = 0$, a contradiction to $k$ is nonzero element of $L$. Hence $\mathscr{F}$ is an $M$-closed subset of $L$. Now, we show that if $x \in \mathscr{F}$ and if $y$ is any element of $L$ such that $x
		\leq y$, then $y \in \mathscr{F}$. Suppose on the contrary that $y$ is not
		in $\mathscr{F}$. Then there exists $z \neq 0$ such that $ (y \cdot
		z)=0$. But $x \leq y$ gives
		$(x \cdot z) \leq (y \cdot z)=0$, a
		contradiction to $x \in \mathscr{F}$. Hence $y \in \mathscr{F}$. Therefore $\mathscr{F}$ is a semifilter with $M$-closed property. By Lemma \ref{2.6.},  $ \mathscr{F}$ is an $S$-Ako and an $S$-Oka family in a reduced $c$-lattice $L$. By Theorem \ref{2.3.}, $ \mathscr{F'}$ is a \textit{$MSP$-family}, that is, $Max(
		\mathscr{F'}) \subseteq Spec(L)$, that is, an element maximal with respect to not being essential element is a prime element of $L$. 
		
	\end{proof}

	Now, we introduce the $S$-Prime Element Principle Supplement (S-PEPS). This Principle enables
	one to observe that the behavior of an $S$-prime element influences the
	behavior of all elements in a $V$-lattice.
	
	\begin{thm}\label{2.5.} $S$-Prime Element Principle Supplement (S-PEPS) : Let $S$ be a multiplicatively closed subset of a $V$-lattice $L$ with 1 compact. Let $ \mathscr{F}$ be an $S$-Ako or an $S$-Oka family in 
		$L$. Assume that every nonempty chain of elements in $
		\mathscr{F'}$ (with respect to $\leq$) has an upper bound in $
		\mathscr{F'}$ and for $m \in Max(
		\mathscr{F'}) $, we have $t \nleq m$ for all $t \in S$.
		
		\begin{enumerate} \item  Let $ \overline{\mathscr{F}}$ be a semi-filter of elements in $L$. If every $S$-prime
			element in $ \overline{\mathscr{F}}$ belong to $ \mathscr{F}$,
			then $ \overline{\mathscr{F}} \subseteq  \mathscr{F}$. \item Let
			$j \in L$. If all $S$-prime elements containing $j$ are in $
			\mathscr{F}$, then all elements containing $j$ are in $
			\mathscr{F}$. \item If all the $S$-prime elements belong to $
			\mathscr{F}$, then all elements of $L$ belong to $ \mathscr{F}$.
		\end{enumerate}
	\end{thm}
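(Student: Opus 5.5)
The plan is to prove (1) by contradiction, using Zorn's Lemma together with the $S$-Prime Element Principle (Theorem \ref{2.3.}). Then (2) and (3) will be read off as special cases of (1) with suitable semi-filters.

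For (1), suppose $\overline{\mathscr{F}} \not\subseteq \mathscr{F}$. Then there is some $x \in \overline{\mathscr{F}} \cap \mathscr{F'}$. Consider the set
\[
\mathscr{P}=\{y\in \mathscr{F'} \;|\; x\le y\},
\]
partially ordered by $\le$. It is nonempty, since it contains $x$. Given a nonempty chain $C\subseteq\mathscr{P}$, the hypothesis gives an upper bound $u\in\mathscr{F'}$ of $C$. This $u$ dominates some element of $C$, hence dominates $x$, so $u\in\mathscr{P}$. By Zorn's Lemma, $\mathscr{P}$ has a maximal element $m$.

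The step I expect to need the most care is checking that $m\in Max(\mathscr{F'})$, i.e.\ that $m$ is maximal in all of $\mathscr{F'}$ and not merely in $\mathscr{P}$. Take $m'\in\mathscr{F'}$ with $m\le m'$. Then $x\le m\le m'$, so $m'\in\mathscr{P}$, and maximality of $m$ in $\mathscr{P}$ forces $m'=m$.

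Now the standing hypothesis says $t\nleq m$ for all $t\in S$ and all $m \in Max(\mathscr{F'})$. Since $\mathscr{F}$ is $S$-Ako or $S$-Oka, Theorem \ref{2.3.} gives $Max(\mathscr{F'})\subseteq Spec_S(L)$, so $m$ is $S$-prime. Because $x\le m$, $x\in\overline{\mathscr{F}}$, and $\overline{\mathscr{F}}$ is a semi-filter, we get $m\in\overline{\mathscr{F}}$. By hypothesis every $S$-prime element of $\overline{\mathscr{F}}$ lies in $\mathscr{F}$, so $m\in\mathscr{F}$. This contradicts $m\in\mathscr{F'}$, and proves (1).

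For (2), take $\overline{\mathscr{F}}=\{y\in L \;|\; j\le y\}$, where "containing $j$" means lying above $j$. This is a semi-filter containing $1$, and its $S$-prime members are exactly the $S$-prime elements containing $j$, so (1) applies directly.

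For (3), apply (2) with $j=0$, or equivalently apply (1) with $\overline{\mathscr{F}}=L$.
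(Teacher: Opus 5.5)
Your proof is correct and follows the paper's route: take an element of $\overline{\mathscr{F}}$ outside $\mathscr{F}$, use Zorn's Lemma to push it up to some $m\in Max(\mathscr{F'})$, then use upward closure of $\overline{\mathscr{F}}$ and Theorem \ref{2.3.} to see that $m$ is an $S$-prime element of $\overline{\mathscr{F}}$ lying outside $\mathscr{F}$, a contradiction; (2) and (3) come from the semi-filters $\{y \mid j\le y\}$ and $L$. The only difference is that you spell out the Zorn's Lemma step, namely working in the subposet of $\mathscr{F'}$ above $x$ and checking that a maximal element there is maximal in all of $\mathscr{F'}$, which the paper leaves implicit.
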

	
	\begin{proof}\begin{enumerate} \item Suppose $ \overline{\mathscr{F}} \not \subseteq  \mathscr{F}$. So there exists an element $i \in  \overline{\mathscr{F}}$ such that $ i \notin \mathscr{F}$. By
			the hypothesis on $ \mathscr{F'}$ (together with Zorn's Lemma), $i
			\leq p$ for some $p \in Max( \mathscr{F'})$. Since $i \in  \overline{\mathscr{F}}$ with $i \leq p$ and $
			\overline{\mathscr{F}}$ is a semi-filter, we get $ p \in
			\overline{\mathscr{F}}$. As $p \in Max( \mathscr{F'})$, by Theorem \ref{2.3.}, $p$ is an $S$-prime element of $L$. We have assumed that
			every $S$-prime
			element in $ \overline{\mathscr{F}}$ belong to $ \mathscr{F}$, so $ p \in \mathscr{F}$, a contradiction to
			$p \in Max( \mathscr{F'})$. Hence $ \overline{\mathscr{F}} \subseteq  \mathscr{F}$.
			
			\item follows from $(1)$ by
			taking $ \overline{\mathscr{F}}$ to be the semi-filter consisting
			of all elements containing $j$.
			\item follows from
			$(1)$ by taking $ \overline{\mathscr{F}}$ to be the family of all
			elements in $L$.
	\end{enumerate} \end{proof}
	
	One of the main result of this section is to prove an analogue of Cohen's Theorem which states that if all the prime ideals are finitely generated, then all ideals are finitely generated, that is, the ring is Noetherian. To prove this result, we introduce the $S_{pr}$-Oka family. A family $ \mathscr{F}$ of elements in  $L$ with $S~{\color{black}\subseteq Pr(L)} \subseteq \mathscr{F}$ is
	said to be an \textit{$S_{pr}$-Oka family} if for all $s \in S$ and for $i\in L$, and $a \in Pr(L) $, ~$i \vee
	(s \cdot a), ~ (i : (s \cdot a) )\in \mathscr{F}$ implies $i  \in
	\mathscr{F}$, where $Pr(L)$ is the set of all principal elements of $L$. By \textit{principal element}, we mean an element which is both meet principal as well as join principal.  An element $m$ is said to be\textit{ meet principal} if $a \wedge mb = m((a:m) \wedge b)$ for all $a,b \in L$ and $m$ is said to be \textit{join principal} if $a \vee (b :m) = (am \vee b): m$ for all $a,b \in L$. It is well known that the product of meet (join) principal element is again a meet (join) principal element, see Dilworth \cite[Lemma 3.3]{D}. Hence $Pr(L)$ is a multiplicatively closed subset of $L$. In fact, every principal element is compact. Hence $Pr(L)\subseteq L_*$.

	A multiplicaive lattice $L$ is said to be an \textit{$r$-lattice}, if  $L$ is a modular, principally generated (every element is a join of principal elements), and compactly generated lattice with $1$ as the compact element of $L$; see D. D. Anderson \cite{A}. Thus in an r-lattice, the product of two compact elements is compact. A natural example of an $r$-lattice is the lattice of all ideals of a commutative ring $R$ with 1.

	In $r$-lattice $L$, to prove a proper element $p$ is prime, it is enough to prove that if $a\cdot b \leq p$ implies that $a \leq p$ or $b \leq p$ for $a, b \in Pr(L)$. 
	
	An analogue of Theorem \ref{2.3.} is true for $S_{pr}$-Oka family in $r$-lattices. For the sake of completeness, we explicitly, quote the result.
	
	\begin{thm}[$S_{pr}$–Prime Element Principle ($S_{pr}$–PEP)]\label{spr}
		Let $L$ be a $r$–lattice and let $S ~{\color{black}\subseteq Pr(L)}$ be a multiplicatively closed subset of $L$. 
		If $\mathscr{F}$ is an  $S_{pr}$–Oka family in $L$, and if for every $m \in Max(\mathscr{F}')$,  $t\nleq m$ for all $t\in S$, 
		then $\mathscr{F}'$ is an \emph{$MSP$–family}; that is,
		$Max(\mathscr{F}') \subseteq Spec_S(L),$
		where $Spec_S(L)$ denotes the set of all $S$–prime elements of $L$.
	\end{thm}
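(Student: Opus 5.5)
Take $i\in Max(\mathscr{F}')$ and argue by contradiction, as in Theorem \ref{2.3.}, but test primality only against principal elements. By hypothesis $t\nleq i$ for all $t\in S$, and $i\neq 1$ because $1\in S\subseteq\mathscr{F}$. Suppose $i$ is not $S$-prime. Then for every $s\in S$ the implication $a\cdot b\leq i\Rightarrow s\cdot a\leq i$ or $s\cdot b\leq i$ fails for some $a,b\in L$.

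The first step is to reduce this failure to principal $a,b$. Since $L$ is an $r$-lattice, every element is a join of principal elements. If $s\cdot a\nleq i$, then some principal $a'\leq a$ already satisfies $s\cdot a'\nleq i$: otherwise $s\cdot a=\bigvee s\cdot a'\leq i$, because multiplication distributes over joins. Choose $b'\leq b$ principal in the same way. We still have $a'\cdot b'\leq a\cdot b\leq i$. So for every $s\in S$ there are $a,b\in Pr(L)$ with $a\cdot b\leq i$, $s\cdot a\nleq i$ and $s\cdot b\nleq i$. This is exactly the paper's remark that in an $r$-lattice primality may be tested on principal elements, carried over to the $S$-version.

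Next, fix $s\in S$ and such principal $a,b$, and repeat the computation of Theorem \ref{2.3.}. Since $s\cdot a\nleq i$, we get $i<i\vee(s\cdot a)$. From $(s\cdot a)\cdot(s\cdot b)\leq a\cdot b\leq i$ we get $s\cdot b\leq(i:(s\cdot a))$. Because $i\leq(i:(s\cdot a))$ and $s\cdot b\nleq i$, this forces $i<(i:(s\cdot a))$. By maximality of $i$ in $\mathscr{F}'$, both $i\vee(s\cdot a)$ and $(i:(s\cdot a))$ lie in $\mathscr{F}$. Now $a\in Pr(L)$, so the $S_{pr}$-Oka property applies and gives $i\in\mathscr{F}$. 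This contradicts $i\in\mathscr{F}'$, so $i$ is $S$-prime. Note that the witnessing $s$ for primality arises only by contraposition over all $s$; it is never produced explicitly, just as in Theorem \ref{2.3.}.

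The main obstacle is the reduction step: its order and quantifiers must be kept straight, and it must use only distributivity over joins together with principal generation. Once the failing pair is principal, the $S_{pr}$-Oka axiom applies verbatim and the rest is routine. The hypotheses of modularity and compactness are not needed beyond guaranteeing the $r$-lattice setting.
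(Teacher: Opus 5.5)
Your proof is correct and is essentially the paper's argument. The paper only says the result follows along the lines of Theorem \ref{2.3.}, relying on its earlier remark that in an $r$-lattice primality can be tested on principal elements. Your explicit reduction supplies the step the paper leaves implicit: for the same $s$, you use principal generation and distributivity of multiplication over joins to replace a failing pair $a,b$ by principal $a'\leq a$, $b'\leq b$. That is exactly what makes the $S_{pr}$-Oka axiom applicable.
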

	\begin{proof}
		Follows on similar lines of Theorem \ref{2.3.}.
	\end{proof}
	
	With this preparation, we are now ready to prove that the set of all compact elements is an $S_{pr}$-Oka family.
	
	\begin{thm}\label{spr-oka}
		Let $L$ be an $r$–lattice and let $L_{*}$ denote the set of all compact elements of $L$. 
		Then $L_{*}$ is an $S_{\mathrm{pr}}$–Oka family in $L$; that is, for every $s\in S~{\color{black}\subseteq Pr(L)}$ and every principal element $a\in L$, 
		\[
		i\vee (s\cdot a)\in L_{*}\ \text{and}\ (i:s\cdot a)\in L_{*}\quad\Longrightarrow\quad i\in L_{*}.
		\]
		In particular, if both $i\vee  a$ and $(i:a)$ are compact for  principal element $a\in L$, then $i$ is compact.
	\end{thm}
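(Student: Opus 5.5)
The proof reduces to the lattice analogue of the classical Lam–Reyes argument that finitely generated ideals form an Oka family. Since $s$ and $a$ are principal and products of principal elements are principal (Dilworth), $c := s\cdot a$ is a principal element. It therefore suffices to prove the last assertion: if $i \vee c$ and $(i:c)$ are compact for a principal element $c$, then $i$ is compact. Applied with $c=s\cdot a$, this gives the displayed implication, so $L_*$ is an $S_{pr}$–Oka family. One also checks the side condition $S\subseteq Pr(L)\subseteq L_*$, which holds because principal elements are compact.

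The plan for the key claim is as follows.

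\textbf{Step 1.} Since $L$ is principally generated, write $i=\bigvee_\alpha x_\alpha$ with principal $x_\alpha$. Then
\[
i\vee c=\bigvee_\alpha x_\alpha \vee c .
\]
Compactness of $i\vee c$ yields finitely many $x_1,\dots,x_n\le i$ with
\[
i\vee c = x_1\vee\cdots\vee x_n\vee c .
\]
Put $i_0=x_1\vee\cdots\vee x_n$. This element is compact, lies below $i$, and satisfies $i\vee c=i_0\vee c$.

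\textbf{Step 2.} Use modularity of the $r$-lattice:
\[
i = i\wedge(i_0\vee c)=i_0\vee (i\wedge c),
\]
since $i_0\le i$. Because $c$ is meet principal, taking $a=i$, $m=c$, $b=1$ in the meet-principal identity gives
\[
i\wedge c = c\cdot (i:c).
\]
Hence $i=i_0\vee c\cdot(i:c)$.

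\textbf{Step 3.} The element $(i:c)$ is compact by hypothesis, and $c$ is compact. Moreover, in an $r$-lattice the product of compact elements is compact, so $c\cdot(i:c)$ is compact. A finite join of compact elements is compact, and therefore $i$ is compact.

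The main obstacle is Step 2, namely justifying $i\wedge c=c(i:c)$ and the modular decomposition. One must check that the paper's meet-principal identity $a\wedge mb=m((a:m)\wedge b)$ applied with $b=1$ indeed gives $i\wedge c=c\,(i:c)$; it does, since $(i:c)\wedge 1=(i:c)$. One must also confirm that $r$-lattices are modular by definition, which they are. The remaining step, compactness of a product of compacts, is quoted from the $r$-lattice discussion preceding the statement.
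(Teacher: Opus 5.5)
Your proof is correct and follows the paper's argument essentially step for step. You use compactness of $i\vee c$ to get a finite $i_0\le i$ with $i\vee c=i_0\vee c$, then modularity and meet-principality of $c=s\cdot a$ to obtain $i=i_0\vee c\cdot(i:c)$, and conclude that $i$ is a finite join of compact elements; the only cosmetic differences are that you decompose $i$ into principal rather than compact elements, and you invoke compactness of the product $c\cdot(i:c)$ directly instead of distributing $c$ over a finite decomposition of $(i:c)$.
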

	
	\begin{proof}
		Since $L$ is an $r$-lattice, it is compactly generated. Hence the elements $i \vee (s\cdot a)$ and $(i: (s\cdot a))$ are a join of compact elements. However $i\vee (s\cdot a)\in L_{*}\ \text{and}\ (i:(s\cdot a))\in L_{*}$, we can write $(i:(s\cdot a))=\bigvee_{k=1}^n p_k$ and $i\vee (s\cdot a)=(s\cdot a)\vee \bigvee_{j=1}^m q_j$, 
		where each $p_k,q_j\in L_{*}$ and $q_j\le i$.  
		Set $P:=\bigvee_{k=1}^n p_k$ and $Q:=\bigvee_{j=1}^m q_j$; then $Q\le i$ and $i\vee (s\cdot a)=(s\cdot a)\vee Q$.
		
		Since $L$ is modular and $Q\le i$, we have
		$ i = \big((s\cdot a)\vee Q\big)\wedge i = Q\vee\big((s\cdot a)\wedge i\big).$
		By Corollary 3.3 of Dilworth \cite{D}, $s\cdot a$ is principal and hence compact.  Hence $
		(s\cdot a)\wedge i = (s\cdot a)\cdot (i:(s\cdot a)) = (s\cdot a)\cdot P$. 
		Thus, $i = Q\vee\big((s\cdot a)\cdot P\big)
		= (\bigvee_{j=1}^{m} q_j) \ \vee\  (\bigvee_{k=1}^{n} \big((s\cdot a)\cdot p_k\big))$,  as 
		 multiplication distributes over  joins.
		
		Since each $q_j$ and $p_k$ is compact, 
		and $(s\cdot a)\cdot p_k$ is compact, we have $i$ is the finite join of compact elements, so $i$ is compact. Thus $L_{*}$ satisfies the $S_{\mathrm{pr}}$–Oka property.
	\end{proof}
	
	As a consequence of Theorem \ref{spr-oka}, we have:
	
	\begin{cor}
		Let $L$ be an $r$-lattice. Then every element maximal with respect to not being compact is prime.
	\end{cor}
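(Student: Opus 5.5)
The corollary should follow from Theorem~\ref{spr-oka} together with the $S_{pr}$--Prime Element Principle (Theorem~\ref{spr}), specialised to $S=\{1\}$. I would take $\mathscr{F}=L_*$, so that $\mathscr{F}'$ is the set of non-compact elements. An element maximal with respect to not being compact is then exactly an element of $Max(\mathscr{F}')$. Once the hypotheses of Theorem~\ref{spr} are checked, it gives $Max(\mathscr{F}')\subseteq Spec_S(L)$, and for $S=\{1\}$ this is $Spec(L)$.

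First I check the side conditions. The set $S=\{1\}$ is multiplicatively closed. Moreover $1$ is principal: meet principality reads $a\wedge b=(a:1)\wedge b$, and join principality reads $a\vee b=(a\vee b):1$, and both hold since $(x:1)=x$. Hence $S\subseteq Pr(L)$. Every principal element is compact, and $1$ is compact in an $r$-lattice, so $S\subseteq Pr(L)\subseteq L_*=\mathscr{F}$, as the definition of an $S_{pr}$--Oka family requires. Theorem~\ref{spr-oka} then says that $L_*$ is an $S_{pr}$--Oka family.

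Next I verify the condition of Theorem~\ref{spr} that $t\nleq m$ for all $t\in S$ whenever $m\in Max(\mathscr{F}')$. Here this just says $m\neq 1$, which holds because $1$ is compact and so $1\notin\mathscr{F}'$. With $s=1$ fixed, the $S$-prime condition becomes: $a\cdot b\le m$ implies $a\le m$ or $b\le m$. That is primeness of $m$, and the remark before Theorem~\ref{spr} lets us test it on principal $a,b$ only.

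The main point of care is whether Theorem~\ref{spr}, whose proof is only indicated as ``similar lines'' to Theorem~\ref{2.3.}, really goes through with $a$ restricted to principal elements. I would re-run the argument of Theorem~\ref{2.3.} directly. Suppose $m\in Max(\mathscr{F}')$ and $a,b\in Pr(L)$ with $ab\le m$, $a\nleq m$ and $b\nleq m$. Then $m<m\vee a$, and $m<(m:a)$ since $b\le(m:a)$ but $b\nleq m$. By maximality both $m\vee a$ and $(m:a)$ are compact. Theorem~\ref{spr-oka}, applied with $s=1$ and $a$ principal, then forces $m$ to be compact, a contradiction. Hence $m$ is prime, which proves the corollary. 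If one does not want to rely on the standing assumption that $Max(\mathscr{F}')\neq\emptyset$, the statement is vacuous when no such maximal element exists, so nothing more is needed.
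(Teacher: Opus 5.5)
Your proposal is correct and follows the paper's proof, which is the one-line appeal to Theorem~\ref{spr} (the $S_{pr}$--PEP) applied with $S=\{1\}$ to the $S_{pr}$--Oka family $L_*$ from Theorem~\ref{spr-oka}. Your direct rerun of the maximality argument, with $a$ principal and primeness tested on principal elements, is a sound way of filling in the ``similar lines'' step that the paper leaves unproved.
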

	
	\begin{proof}
		Follows from Theorem \ref{spr} and Theorem \ref{spr-oka}. 
	\end{proof}
	It is well known that the compact elements of $Id(R)$ are nothing but the finitely generated ideals of a commutative ring $R$ with 1. Hence, we have the following corollary.
	
	\begin{cor}[Cohen \cite{cohen}]
		Let $I$ be an ideal in a commutative  ring $R$ with 1 that is maximal with respect to not being finitely generated. Then $I$ is prime.
	\end{cor}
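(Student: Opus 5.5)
The corollary is Cohen's theorem, and I would derive it by transporting the preceding corollary from the lattice $L=Id(R)$ back to the ring $R$, taking $S=\{(1)\}=\{R\}$.

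\textbf{Step 1: $Id(R)$ is an $r$-lattice.} The paper already notes this. Concretely, $Id(R)$ is modular. It is principally generated, because each principal ideal $Rx$ is a principal element in Dilworth's sense (meet and join principal, by the standard ring identities) and every ideal is the join of its principal ideals. It is compactly generated. Finally, $1=R=R\cdot 1$ is compact.

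\textbf{Step 2: identify compact elements.} The compact elements of $Id(R)$ are exactly the finitely generated ideals, as recalled just before the statement. Hence $I$ being maximal with respect to not being finitely generated means precisely that $I\in Max(\mathscr{F}')$, where $\mathscr{F}=L_*$.

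\textbf{Step 3: apply the preceding corollary.} By Theorem~\ref{spr-oka}, $L_*$ is an $S_{pr}$-Oka family. Its hypothesis $S\subseteq Pr(L)\subseteq\mathscr{F}$ holds, since principal elements are compact and $R\in L_*$. The condition $t\nleq m$ for $t\in S$ is automatic: $m=I$ is not finitely generated, so $I\neq R$. Theorem~\ref{spr} then gives that $I$ is an $S$-prime element of $Id(R)$ with $S=\{R\}$, that is, a prime element of $Id(R)$.

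\textbf{Step 4: translate back to $R$.} A prime element of $Id(R)$ is a prime ideal of $R$, by the remark in the paper, or directly: if $ab\in I$, then $(a)(b)\subseteq I$, so $(a)\subseteq I$ or $(b)\subseteq I$.

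\textbf{Main obstacle.} The delicate point is the modularity step inside Theorem~\ref{spr-oka}, namely the identity $(s\cdot a)\wedge i=(s\cdot a)(i:(s\cdot a))$ for principal $s\cdot a$. In the ring setting this reduces to the elementary fact $(x)\cap I=x(I:x)$, which I would verify directly if needed. That direct check also recovers Cohen's classical argument: $I=(q_1,\dots,q_m)+x(I:x)$ with $(I:x)$ finitely generated.
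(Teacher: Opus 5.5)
Your proposal is correct and is essentially the paper's argument. The paper gets Cohen's theorem by specializing the preceding corollary (Theorems~\ref{spr} and~\ref{spr-oka}) to the $r$-lattice $Id(R)$ with $S=\{R\}$, using that the compact elements of $Id(R)$ are exactly the finitely generated ideals; your extra checks ($I\neq R$, $R\in Pr(L)\subseteq L_*$, and that prime elements of $Id(R)$ are prime ideals) just make explicit what the paper leaves implicit.
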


	\begin{thm}
		Let $S = \{1\}$, a multiplicatively closed subset  of a $r$-lattice $L$. Then $L$ is Noetherian  if and only if  every prime element is compact.
	\end{thm}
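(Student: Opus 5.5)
Interpret "Noetherian" for an $r$-lattice in the standard way: $L$ satisfies the ascending chain condition, equivalently every element of $L$ is compact. (In a compactly generated lattice these two conditions agree; the paper presumably uses one of them as its definition.) The plan is to get the forward implication directly and the converse from the $S$-Prime Element Principle Supplement, Theorem \ref{2.5.}(3), with $S=\{1\}$ and $\mathscr{F}=L_*$.

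For the forward implication, if $L$ is Noetherian then every element is compact, so in particular every prime element is compact. If the definition is ACC instead, argue as follows. Write $p=\bigvee C$ with $C\subseteq L_*$, which is possible since $L$ is compactly generated. The finite subjoins of $C$ form a directed family, and by ACC this family has a maximal member, which must equal $p$. Hence $p$ is a finite join of compact elements and is therefore compact.

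For the converse, assume every prime element is compact and take $S=\{1\}\subseteq Pr(L)$. By Theorem \ref{spr-oka}, $L_*$ is an $S_{pr}$-Oka family; note $1\in L_*$ because $1$ is compact in an $r$-lattice. I would then verify the hypotheses of Theorem \ref{2.5.}, which is stated for $S$-Ako/$S$-Oka families, for $S_{pr}$-Oka families. Its proof uses only Theorem \ref{2.3.}, so replacing that with Theorem \ref{spr} gives the same argument verbatim. The condition $t\nleq m$ for $t\in S$ reduces to $1\nleq m$, which holds because $m\ne 1$: indeed $1\in\mathscr{F}$ and $m\in\mathscr{F}'$.

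The main obstacle is the chain hypothesis: every nonempty chain in $\mathscr{F}'$, the set of non-compact elements, needs an upper bound in $\mathscr{F}'$. I take the join $c$ of the chain. If $c$ were compact, then $c$ would lie below finitely many members of the chain, hence below its largest one, say $x$. Then $c=x$ is compact, a contradiction. So $c\in\mathscr{F}'$. This step needs only compactness and is the genuinely lattice-theoretic content. With all hypotheses in place, Theorem \ref{2.5.}(3) gives $L=L_*$, so every element is compact.

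Finally, if the definition of Noetherian is ACC, convert: given a chain $a_1\le a_2\le\cdots$, its join is compact, so it lies below some $a_n$, and the chain stabilizes from $n$ on. Equivalently, one can bypass Theorem \ref{2.5.} altogether. If some element were non-compact, Zorn's lemma together with the chain argument yields a maximal non-compact element, which is prime by Theorem \ref{spr} and hence compact by hypothesis, a contradiction.
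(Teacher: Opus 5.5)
Your proposal is correct and follows the paper's route: take $\mathscr{F}=L_*$, use Theorem \ref{spr-oka} to get an $S_{pr}$-Oka family, and apply the $S$-Prime Element Principle Supplement (Theorem \ref{2.5.}(3)), with Theorem \ref{spr} used in place of Theorem \ref{2.3.}. You also supply details the paper leaves implicit: the upper-bound property for chains of non-compact elements, the condition $1\nleq m$, the forward direction, and the equivalence between ACC and every element being compact.
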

	\begin{proof}
		Let $\mathscr{F}$ be a set of all compact elements of $L$. Since $1$ is a principal and hence compact element of $L$, $S~{\color{black}\subseteq Pr(L)}  \subseteq \mathscr{F}$. Then $\mathscr{F}$ is an $S_{pr}$-Oka family follows from Theorem \ref{spr-oka}. If all prime elements are compact, then by $S$-Prime Element Principle Supplement, all elements of $L$ are compact, i.e., $L$ is a Noetherian lattice. 
	\end{proof}
	
	\begin{lem} \label{1.67.}
		Let $S$ be a multiplicatively closed subset  of a $c$-lattice $L$ and $\mathscr{F}$ be a set of  elements of $L$ such that $S \subseteq \mathscr{F}$ with $i, ~j$ be any elements of $L$. Then the following statements are equivalent:
		
		\begin{enumerate}
			\item \label{1.67.1}$i \leq sj$ and $sj, ~(i:sj) \in \mathscr{F}$ for all $s \in S \Rightarrow i \in \mathscr{F}$.
			\item \label{1.67.2} $\mathscr{F}$ is an $S$-Oka family. 
		\end{enumerate}	
	\end{lem}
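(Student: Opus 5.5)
The plan is to mirror the Lam--Reyes argument that the Oka property is equivalent to its restricted form in which $i$ already lies below the element being adjoined. I read condition (\ref{1.67.1}) with the quantifier over $s$ placed as in the definition of an $S$-Oka family: for each $s\in S$ and $i,j\in L$, if $i\le s j$ and $sj,(i:sj)\in\mathscr F$, then $i\in\mathscr F$. The two implications are then handled separately.

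For (\ref{1.67.2})$\Rightarrow$(\ref{1.67.1}), assume $\mathscr F$ is an $S$-Oka family. Let $s\in S$ and $i,j\in L$ with $i\le sj$ and $sj,(i:sj)\in\mathscr F$. Since $i\le s\cdot j$, we have $i\vee (s\cdot j)=s\cdot j\in\mathscr F$. By hypothesis $(i:(s\cdot j))\in\mathscr F$, so the $S$-Oka property with $a=j$ gives $i\in\mathscr F$. This direction is immediate.

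For (\ref{1.67.1})$\Rightarrow$(\ref{1.67.2}), let $s\in S$ and $i,a\in L$ with $i\vee sa\in\mathscr F$ and $(i:sa)\in\mathscr F$; we must show $i\in\mathscr F$. The natural move is to apply (\ref{1.67.1}) to the element $i\vee sa$ in place of $sj$. The obstacle is that (\ref{1.67.1}) requires the upper element to have the form $s\cdot j$. Taking $s=1\in S$ and $j=i\vee sa$ works, since $1\cdot j=j$ in a $V$-lattice. Then $i\le 1\cdot j$ and $1\cdot j\in\mathscr F$, so it remains to check $(i:j)\in\mathscr F$.

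The key identity is $(i:(i\vee sa))=(i:sa)$. I will prove it in the $c$-lattice, where multiplication distributes over joins. First, $x(i\vee sa)=xi\vee x(sa)$. Since $xi\le i$ always holds, $x(i\vee sa)\le i$ if and only if $x(sa)\le i$. Hence $(i:i\vee sa)=(i:sa)\in\mathscr F$, and (\ref{1.67.1}) applied with $s=1$, $j=i\vee sa$ yields $i\in\mathscr F$.

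The only delicate step is the quantifier reading of (\ref{1.67.1}). If it is read literally as ``for all $s\in S$'', then the second direction should still work by the same trick, since multiplying by an arbitrary $s$ only shrinks the elements involved. In that case I would check that the hypotheses $sj,(i:sj)\in\mathscr F$ for every $s$ can be reduced to the case $s=1$. If they cannot, I would restrict to the reading used in the definition of an $S$-Oka family.
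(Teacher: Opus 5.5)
Your proof is correct and is essentially the paper's argument: (2)$\Rightarrow$(1) is the observation $i\vee sj=sj$, and (1)$\Rightarrow$(2) applies (1) to the element $i\vee sa$ using $(i:(i\vee sa))=(i:sa)$, which the paper writes as $(i:i)\wedge(i:sa)$; your explicit choice $s=1$, $j=i\vee sa$ makes precise a step the paper leaves tacit. You should drop the closing hedge, though: under the literal ``for all $s\in S$'' reading the same trick does not go through, because $s'\cdot(i\vee sa)\le s'$ means the required inequality $i\le s'\cdot(i\vee sa)$ fails whenever $i\not\le s'$, so the quantifier placement you adopted (which the paper also implicitly uses) is the right one.
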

	
	{\color{black} \begin{proof}
			(\ref{1.67.1}) $\implies$ (\ref{1.67.2}) Let $i\vee sj, (i:sj)\in \mathscr{F}$. We claim that $i\in \mathscr{F}$. Clearly, $i\leq (i\vee sj)$ and $(i\vee sj)\in \mathscr{F}$. Also, $i:(i\vee sj)=(i:i)\wedge (i:sj)=1\wedge (i:sj)=(i:sj)\in \mathscr{F}$. Therefore, by (\ref{1.67.1}), $i\in \mathscr{F}$. This proves that $\mathscr{F}$ is an $S$-Oka family.
			
			(\ref{1.67.2}) $\implies$ (\ref{1.67.1}) Obvious.
	\end{proof}}

	We found  following result in \cite{D}. 
	
	\begin{lem}[Dilworth \cite{D}] \label{1.68.}
		If $M_1$ and $M_2$ are meet principal elements, then $M_1\cdot M_2$ is a \mbox{meet principal.}
	\end{lem}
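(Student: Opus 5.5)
We must show: if $M_1,M_2$ are meet principal, then $M_1M_2$ is meet principal, i.e. for all $a,b\in L$,
\[
a\wedge (M_1M_2)b=(M_1M_2)\bigl((a:M_1M_2)\wedge b\bigr).
\]
The plan is to apply the meet principal identity twice, first for $M_1$ and then for $M_2$. Along the way we use the standard residual identity $(a:M_1M_2)=((a:M_1):M_2)$. This identity follows directly from the definition of the residual in a multiplicative lattice: $x\le ((a:M_1):M_2)$ iff $xM_2\le (a:M_1)$ iff $xM_2M_1\le a$ iff $x\le (a:M_1M_2)$, using commutativity and associativity.

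\emph{First step.} Write $(M_1M_2)b=M_1(M_2b)$ and apply meet principality of $M_1$ with the pair $(a,M_2b)$:
\[
a\wedge M_1(M_2b)=M_1\bigl((a:M_1)\wedge M_2b\bigr).
\]

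\emph{Second step.} Apply meet principality of $M_2$ to the inner meet, with the pair $((a:M_1),b)$:
\[
(a:M_1)\wedge M_2b=M_2\bigl(((a:M_1):M_2)\wedge b\bigr)=M_2\bigl((a:M_1M_2)\wedge b\bigr).
\]
Substituting this back and reassociating gives
\[
a\wedge (M_1M_2)b=M_1M_2\bigl((a:M_1M_2)\wedge b\bigr),
\]
which is exactly the meet principal identity for $M_1M_2$.

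The only point needing care is the residual identity $(a:M_1M_2)=((a:M_1):M_2)$. It relies on the equivalence $x\cdot b\le a\iff x\le (a:b)$, which holds in a multiplicative lattice because multiplication distributes over arbitrary joins. We therefore state that the lemma is read in the multiplicative-lattice setting, as in Dilworth. Everything else is substitution, so no real obstacle is expected.
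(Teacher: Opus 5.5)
Your proof is correct, and it is the standard argument: apply meet principality of $M_1$ to the pair $(a,M_2b)$, then meet principality of $M_2$ to the pair $((a:M_1),b)$, and use $((a:M_1):M_2)=(a:M_1M_2)$. This is essentially Dilworth's own proof; the paper gives no proof of its own and simply cites Dilworth. Your caveat that the residual identity relies on $x\cdot b\le a \iff x\le (a:b)$, and hence on join-distributivity of the multiplication, is appropriate, since the paper only invokes the lemma in $c$-lattices.
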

	
	\begin{lem}\label{1.69.}
		If $i\leq j$ and $j$ is meet principal element in a $c$-lattice $L$, then $j \cdot (i:j) = i$.
	\end{lem}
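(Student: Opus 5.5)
The plan is to apply the definition of meet principal directly, with a suitable choice of the two free elements, and then remove the leftover meet using $i \leq j$. Recall that $j$ is meet principal if $a \wedge j b = j((a:j) \wedge b)$ for all $a,b \in L$. I will take $a = i$ and $b = 1$. Since $1$ is the multiplicative identity, $j \cdot 1 = j$. Hence the left-hand side becomes $i \wedge j$, and the right-hand side becomes $j\cdot((i:j)\wedge 1) = j\cdot (i:j)$. So we obtain the identity $i \wedge j = j \cdot (i:j)$.

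Next, the hypothesis $i \leq j$ gives $i \wedge j = i$. Therefore $j\cdot(i:j) = i$, which is the claim.

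It is also worth checking the inequality $j\cdot (i:j)\leq i$ independently, as a sanity check. It follows from the residuation property $x\cdot j \leq i \iff x \leq (i:j)$ recorded in the Definition of $(a:b)$ above, applied with $x=(i:j)$. In a $c$-lattice this property holds because multiplication distributes over arbitrary joins, so $(i:j)\cdot j = \bigvee\{x\cdot j : x\cdot j\leq i\}\leq i$.

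The only point needing care is the substitution $b=1$. One must confirm that $(i:j)\wedge 1 = (i:j)$, which is immediate since $1$ is the top element, and that the meet principal identity is allowed for arbitrary $a,b\in L$, which is how the paper states it. I therefore expect no real obstacle; the argument is a one-line specialization of the meet principal identity.
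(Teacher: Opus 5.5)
Your proof is correct and essentially matches the paper's. The paper cites Dilworth's Corollary~3.1, $(b:j)j=b\wedge j$, which is exactly the $b=1$ specialization of the meet-principal identity that you derive directly; it then concludes with $i\wedge j=i$ from $i\leq j$, just as you do.
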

	\begin{proof}
		Since $(i:j)\leq (i:j)$, we always have $j \cdot (i:j) \leq i$. Since $j$ is meet principal, by Corollary 3.1 in \cite{D}, we have $(b:j)j=b\wedge j$ for all $b\in L$. Take $b=i$, we get $(i:j)j=i\wedge j=i$.
	\end{proof}

	\begin{thm} \label{1.71.}
		Let $S = \{1\}$ be a multiplicatively closed subset  of a $c$-lattice $L$. Then 
		every element of $L$ is meet principal if and only if every prime element of $L$ is meet principal.

	\end{thm}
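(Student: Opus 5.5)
One direction is trivial: if every element of $L$ is meet principal, then in particular every prime element is. For the converse, I would follow the template used for the Cohen-type theorem. Take $\mathscr{F}$ to be the set of all meet principal elements of $L$ and $S=\{1\}$. First I check $S\subseteq\mathscr{F}$: $1$ is meet principal because $a\wedge 1\cdot b = a\wedge b = 1\cdot((a:1)\wedge b)$, using $(a:1)=a$. Next I show $\mathscr{F}$ is an $S$-Oka family (here simply an Oka family). Then the $S$-Prime Element Principle Supplement (Theorem \ref{2.5.}(3)) gives the conclusion: if all prime elements lie in $\mathscr{F}$, then all elements of $L$ lie in $\mathscr{F}$. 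This step relies on the standing assumption that chains in $\mathscr{F}'$ have upper bounds (equivalently $Max(\mathscr{F}')\neq\emptyset$ above any given element). The hypothesis $t\nleq m$ for $t\in S$ is automatic, since $m\neq 1$.

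For the Oka property I would use Lemma \ref{1.67.} with $s=1$. It suffices to prove: if $i\le j$ with $j$ and $(i:j)$ meet principal, then $i$ is meet principal. By Lemma \ref{1.69.}, $i=j\cdot(i:j)$. By Lemma \ref{1.68.}, a product of two meet principal elements is meet principal. Hence $i\in\mathscr{F}$, and Lemma \ref{1.67.} upgrades this to the full Oka condition: if $i\vee j$ and $(i:j)$ are in $\mathscr{F}$, then $i\in\mathscr{F}$. I would double-check the step $i:(i\vee j)=(i:j)$ used in that lemma, which is standard residuation in a multiplicative lattice.

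The main obstacle is justifying the application of the Supplement. That requires the maximality hypothesis on $\mathscr{F}'$, which the paper imposes globally as a standing assumption, so I would invoke that assumption explicitly. If one wanted a Zorn argument instead, one would need that a union of a chain of non-meet-principal elements is non-meet-principal; this is not obvious, and I would avoid it by citing the standing assumption. With that, every element maximal in $\mathscr{F}'$ is prime by Theorem \ref{2.3.}. Such an element would then be a prime that is not meet principal, contradicting the hypothesis. So $\mathscr{F}'=\emptyset$.
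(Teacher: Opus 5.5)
Your proposal is correct and matches the paper's proof step for step. You take $\mathscr{F}$ to be the meet principal elements, use Lemmas \ref{1.69.} and \ref{1.68.} to verify condition (1) of Lemma \ref{1.67.} (hence the Oka property), and then apply the Supplement. Your handling of the existence of maximal elements is slightly more careful than the paper's, which invokes Theorem \ref{2.5.} without checking its chain hypothesis. As you note, part (3) needs only one element of $Max(\mathscr{F}')$, and that is exactly what the standing assumption supplies. Note, though, that this assumption is weaker than the chain condition, not equivalent to it as your parenthetical says.
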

	\begin{proof}
		Let $\mathscr{F}$ be set of all set of all meet principal elements of $L$. Since $L$ is a $c$-lattice, By Lemma \ref{1.68.}, $\mathscr{F}$ is an $M$-closed set of $L$. Let $i, ~j$ be any elements of $L$ such that $i \leq j$ with $j , ~ (i:j) \in \mathscr{F}$. Then by Lemma \ref{1.69.}, $j \cdot (i:j) = i$. Since $\mathscr{F}$ is an $M$-closed set, we get $i \in \mathscr{F}$. Therefore, by Lemma \ref{1.67.}, $\mathscr{F}$ is an $S$-Oka family.  If all prime elements are meet principal, then by $S$-Prime Element Principle Supplement, all elements of $L$ are meet principal.
	\end{proof}

	\begin{thm}\label{2.60.}
		Let $S = \{1\}$ be a multiplicatively closed subset of a $c$-lattice $L$ and let $i_{\alpha} $, $\alpha \in \Lambda$ ($\Lambda$ is an indexed set) be the fixed set of elements of $L$. Then any element maximal with respect to not containing a finite product of  $i_{\alpha}$'s is prime element. If the   $i_{\alpha}$'s are compact element and every prime element of $L$ contains some  $i_{\alpha}$, then a finite product of some of  $i_{\alpha}$ is zero element of $L$.
	\end{thm}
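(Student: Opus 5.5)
Let $\mathscr{F}$ be the family of all $j\in L$ with $i_{\alpha_1}\cdots i_{\alpha_n}\le j$ for some finite product of the $i_\alpha$'s (repetitions allowed); we include the empty product $1$, so $1\in\mathscr{F}$ and $S=\{1\}\subseteq\mathscr{F}$. An element maximal with respect to not containing a finite product of $i_\alpha$'s is then exactly an element of $Max(\mathscr{F}')$.

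\emph{First step: $\mathscr{F}$ is an $S$-Ako family.} We show that $\mathscr{F}$ is a semi-filter with the $M$-closed property and then apply Lemma \ref{2.6.}. The semi-filter property holds because $x\le j\le k$ gives $x\le k$. For $M$-closedness, suppose $x\le j$ and $y\le k$, where $x,y$ are finite products. By monotonicity of multiplication, $xy\le jk$, and $xy$ is again a finite product of $i_\alpha$'s. (Alternatively, one can argue directly as in Lemma \ref{5.1.}, with $s_1=x$ and $s_2=y$.)

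\emph{Second step: apply the $S$-Prime Element Principle.} Since $S=\{1\}$ and every $m\in Max(\mathscr{F}')$ is proper, we have $1\nleq m$. Theorem \ref{2.3.} then gives $Max(\mathscr{F}')\subseteq Spec(L)$, which proves the first assertion.

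\emph{Third step: the second assertion.} Assume each $i_\alpha$ is compact and every prime contains some $i_\alpha$. Suppose no finite product of the $i_\alpha$'s is $0$, so $0\in\mathscr{F}'$. We apply Zorn's lemma to $\{j\in\mathscr{F}' : 0\le j\}$. Let $\{c_\lambda\}$ be a chain in $\mathscr{F}'$ and put $c=\bigvee c_\lambda$. If $c\in\mathscr{F}$, then $x\le c$ for some finite product $x$. In a $c$-lattice, products of compact elements are compact, so $x$ is compact. Hence $x\le c_{\lambda_1}\vee\cdots\vee c_{\lambda_n}=c_{\lambda_k}$ for the largest index $\lambda_k$ in the chain, which forces $c_{\lambda_k}\in\mathscr{F}$, a contradiction. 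So chains have upper bounds in $\mathscr{F}'$, and there is $p\in Max(\mathscr{F}')$. By the first assertion $p$ is prime. By hypothesis $i_\alpha\le p$ for some $\alpha$, so $p\in\mathscr{F}$, a contradiction. Therefore some finite product of the $i_\alpha$'s equals $0$. Equivalently, this is S-PEPS (Theorem \ref{2.5.}(3)): every prime lies in $\mathscr{F}$, so $0\in\mathscr{F}$.

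The main obstacle is the third step, specifically the Zorn argument. This is where compactness of the $i_\alpha$'s is used, through the $c$-lattice property that finite products of compact elements are compact. It is also where we must avoid relying solely on the paper's standing assumption $Max(\mathscr{F}')\neq\emptyset$, since we need a maximal element above $0$, and compactness guarantees one.
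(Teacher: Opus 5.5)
Your proposal is correct and follows the paper's proof essentially step for step. The paper shows $\mathscr{F}$ is a semi-filter with the $M$-closed property, hence $S$-Ako by Lemma \ref{2.6.}, and the $S$-Prime Element Principle then gives the first assertion. The chain argument followed by S-PEPS gives the second assertion, and you unwind S-PEPS directly via Zorn's lemma. Your explicit remark that a finite product of compact $i_\alpha$'s is compact in a $c$-lattice supplies the step the paper leaves implicit when it passes from $\prod i_k\le w$ to $\prod i_k\le j$ for some $j\in C$.
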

	
	\begin{proof}
		Let $\mathscr{F}$ be set of all elements  that contains a (finite) product of the  $i_{\alpha}$'s.  Clearly, $\mathscr{F}$ is filter family with satisfying $M$-closed property. By Lemma \ref{2.6.}, $\mathscr{F}$ is an $S$-Ako family. and by  Theorem \ref{2.3.} i.e. $S$-Prime Element Principle, we get any element maximal with respect to not containing a finite product of  $i_{\alpha}$'s is prime element. Now, suppose that  $i_{\alpha}$'s are compact elements. Let $C$ be chain of elements of $\mathscr{F}'$. We claim that $C$ has an upper bound in $\mathscr{F}'$. Let $w = \vee \{j \;|\:  j \in C\}$. We claim that $w \in \mathscr{F}'$. Suppose on the contrary that $w \not \in \mathscr{F}'$. Hence $w$ contains a (finite) product of the  $i_{\alpha}$'s. say $\prod^{n}_{k=1} i_{k} \leq w $. This gives $\prod^{n}_{k=1} i_{k} \leq j$ for some $j \in C$, a contradiction to $j \in \mathscr{F}'$. Therefore  $w \in \mathscr{F}'$ and hence every chain $C$ in $\mathscr{F}'$  has an upper bound in $\mathscr{F}'$. Now, suppose that  every prime element $p$ of $L$ contains some  $i_{\alpha}$, i.e.  $p \in \mathscr{F}$. By Theorem \ref{2.5.}, a finite product of some of  $i_{\alpha}$ is zero element of $L$.
	\end{proof}

	\begin{cor}
		Let $L$ be a $c$-lattice and $S = \{ i_{\alpha}\} $, $\alpha \in \Lambda$ ($\Lambda$ is an indexed set) be the set of minimal prime elements of $L$. If each $ i_{\alpha}$ is compact element, then $|S| < \infty$. 
	\end{cor}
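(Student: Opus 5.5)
Apply Theorem \ref{2.60.} with the family $\{i_\alpha\}$ taken to be the set of minimal prime elements, then turn the resulting product relation into a finiteness statement.

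\textbf{Step 1: every prime contains a minimal prime.} In a $c$-lattice the set of prime elements below a given prime $p$, ordered downward, has lower bounds for chains. The meet of a chain of primes is again prime: if $a\cdot b\le \bigwedge C$ for compact $a,b$ with $a\nleq q_1$ and $b\nleq q_2$, $q_1,q_2\in C$, then the smaller of $q_1,q_2$ contains neither $a$ nor $b$, which is a contradiction. Zorn's Lemma then gives a minimal prime $i_\alpha\le p$. So every prime element contains some $i_\alpha$.

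\textbf{Step 2: a finite product of minimal primes is zero.} Since each $i_\alpha$ is compact, the second part of Theorem \ref{2.60.} applies. It yields finitely many minimal primes $i_{\alpha_1},\dots,i_{\alpha_n}$, repetitions allowed, with $i_{\alpha_1}\cdots i_{\alpha_n}=0$.

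\textbf{Step 3: there are no other minimal primes.} Let $q$ be any minimal prime. Then $i_{\alpha_1}\cdots i_{\alpha_n}=0\le q$. Primeness of $q$ extends from compact elements to arbitrary ones because $L$ is compactly generated. Hence $i_{\alpha_k}\le q$ for some $k$, and minimality of $q$ forces $q=i_{\alpha_k}$. So the set of minimal primes is contained in $\{i_{\alpha_1},\dots,i_{\alpha_n}\}$, and $|S|\le n<\infty$.

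\textbf{Main obstacle.} I expect the main obstacle to be Step 1. This is the point where we need that primes exist below primes and that meets of chains of primes are prime; we verify the latter only on compact elements, as above. If $L$ has no prime elements at all, Theorem \ref{2.60.} still gives an empty product $1=0$. That case is trivial, since then $S=\emptyset$.
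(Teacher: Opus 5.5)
Your proposal is correct and follows essentially the same route as the paper. Both arguments apply Theorem \ref{2.60.} to the family of minimal primes to get $i_{\alpha_1}\cdots i_{\alpha_n}=0$, then use primeness and minimality to conclude that every minimal prime is one of the $i_{\alpha_k}$. You also fill in two details the paper only asserts: the Zorn's Lemma argument (via meets of chains of primes being prime) that every prime contains a minimal prime, and the explicit check that $S\subseteq\{i_{\alpha_1},\dots,i_{\alpha_n}\}$.
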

	
	\begin{proof}
		Since every prime element contains minimal prime element of $L$, by Theorem \ref{2.60.}, we have $\prod^{n}_{k=1} i_{k} =0 $. This gives $S \subseteq \{i_1, i_2, \cdots\cdots,i_n\}$ i.e.$|S| < \infty$.  
	\end{proof}
	
\end{document}